\documentclass[12pt,a4]{amsart}
\synctex=1
\usepackage[a4paper, left=28mm, right=28mm, top=26mm, bottom=31mm]{geometry}
\usepackage{amsthm,amssymb}
\usepackage{cleveref}
\usepackage[all]{xy}
\usepackage{amsmath}
\usepackage{textcomp}
\usepackage{cases}
\usepackage{here}
\usepackage[dvipdfmx]{graphicx}
\usepackage{amscd}
\usepackage{mathrsfs}
\usepackage{float}
\usepackage{wrapfig}
\usepackage{color}

\newtheorem{thm}{Theorem}[section]
\newtheorem{dfn}[thm]{Definition}
\newtheorem{lem}[thm]{Lemma}
\newtheorem{prp}[thm]{Proposition}
\newtheorem{rem}[thm]{Remark}
\newtheorem{cor}[thm]{Corollary}

\crefname{thm}{Theorem}{Theorems}
\crefname{dfn}{Definition}{Definitions}
\crefname{lem}{Lemma}{Lemmas}
\crefname{prp}{Proposition}{Propositions}
\crefname{rem}{Remark}{Remarks}
\crefname{cor}{Corollary}{Corollaries}
\crefname{cond}{Condition}{Conditions}
\crefname{asm}{Assumption}{Assumptions}
\crefname{section}{Section}{Sections}
\crefname{subsection}{Subsection}{Subsections}

\def\P{\mathbb{P}}
\def\A{\mathbb{A}}

\def\G{\mathbb{G}}
\def\R{\mathbb{R}}
\def\Z{\mathbb{Z}}
\def\Q{\mathbb{Q}}
\def\C{\mathbb{C}}
\def\F{\mathbb{F}}
\def\O{\mathcal{O}}

\def\K{\mathscr{K}}
\def\D{\mathscr{D}}
\def\sA{\mathscr{A}}

\def\tropcyc{\mathop{\mathrm{tropcyc}}\nolimits}

\def\wtTrop{\mathop{\mathrm{wtTrop}}\nolimits
\def\Gr{\mathop{\mathrm{Gr}}\nolimits}}
\def\sem{\mathop{\mathrm{sem}}\nolimits}

\def\Aff{\mathop{\mathrm{Aff}}\nolimits}
\def\Dol{\mathop{\mathrm{Dol}}\nolimits}
\def\ch{\mathop{\mathrm{ch}}\nolimits}
\def\cyc{\mathop{\mathrm{cyc}}\nolimits}

\def\alg{\mathop{\mathrm{alg}}\nolimits}
\def\hol{\mathop{\mathrm{hol}}\nolimits}
\def\Kah{\mathop{\mathrm{Kah}}\nolimits}

\def\pr{\mathop{\mathrm{pr}}\nolimits}
\def\log{\mathop{\mathrm{log}}\nolimits}

\def\semialg{\mathop{\mathrm{semi-alg}}\nolimits}
\def\Zar{\mathop{\mathrm{Zar}}\nolimits}

\def\Dol{\mathop{\mathrm{Dol}}\nolimits}

\def\pr{\mathop{\mathrm{pr}}\nolimits}

\def\CH{\mathop{\mathrm{CH}}\nolimits}

\def\Gr{\mathop{\mathrm{Gr}}\nolimits}

\def\res{\mathop{\mathrm{res}}\nolimits}
\def\Im{\mathop{\mathrm{Im}}\nolimits}
\def\Ker{\mathop{\mathrm{Ker}}\nolimits}

\def\sing{\mathop{\mathrm{sing}}\nolimits}

\def\Ber{\mathop{\mathrm{Ber}}\nolimits}
\def\relint{\mathop{\mathrm{rel.int}}\nolimits}

\def\Span{\mathop{\mathrm{Span}}\nolimits}

\def\supp{\mathop{\mathrm{supp}}\nolimits}
\def\ZR{\mathop{\mathrm{ZR}}\nolimits}

\def\Spec{\mathop{\mathrm{Spec}}\nolimits}

\def\Hom{\mathop{\mathrm{Hom}}\nolimits}

\def\Trop{\mathop{\mathrm{Trop}}\nolimits}

\def\alg{\mathop{\mathrm{alg}}\nolimits}
\def\Gr{\mathop{\mathrm{Gr}}\nolimits}

\numberwithin{equation}{section}
\numberwithin{figure}{section}

\makeatletter
\DeclareRobustCommand{\genericinterval}[2]{%
 \@ifstar{\genericinterval@star{#1}{#2}}{\genericinterval@nostar{#1}{#2}}}
  \newcommand{\genericinterval@star}[4]{\mathopen{}\mathclose{\left#1#3,#4\right#2}}
   \newcommand{\genericinterval@nostar}[4]{\mathopen{#1}#3,#4\mathclose{#2}}

        \makeatother

\begin{document}
\title[Differential forms and cohomology in tropical and complex geometry]{Differential forms and cohomology in tropical and complex geometry}
\author{Ryota Mikami}
\address{Institute of Mathematics, Academia Sinica, Astronomy-Mathematics Building, No.\ 1, Sec.\ 4, Roosevelt Road, Taipei 10617, Taiwan.}
\email{mikami@gate.sinica.edu.tw}
\subjclass[2020]{Primary 14T90; Secondary 14C30}
\keywords{tropical geometry, cycle class maps, logarithmic differentials, K-theory, semi-algebraic geometry}
\date{\today}

\begin{abstract}
  Ducros, Hrushovski, and Loeser gave maps from  families of archimedean diffrential forms to non-archiemedean (or tropical) ones, which are compatible with integrals on algebraic varieties. 
  In this paper, we introduce slight modifications of their maps for  complex projective varieties which give natural maps from tropical to the usual Dolbeault cohomology. 
  We also show that our maps are compatible with integrals on generic semi-algebraic subsets and those on their weighted tropicalizations. 
  Weighted tropicalizations induce the dual maps of the above maps of Dolbeault cohomology groups under some assumptions.
\end{abstract}

\maketitle

\setcounter{tocdepth}{1}
\tableofcontents
\section{Introduction}
Cohomology groups for ``tropical'' objects were first considered by Gross and Siebert (\cite{GS10}) in their program of mirror symmetry.
They defined cohomology groups of affine manifolds with singularities with extra datum, and showed that they are isomorphic to logarithmic Dolbeault cohomology groups of certain maximally degenerate Calabi-Yau manifolds (see \cite{GS10} for details).
Itenberg, Katzarkov, Mikhalkin, and Zharkov (\cite{IKMZ19}) introduced \textit{tropcial (singular) cohomology} groups of smooth projective tropical varieties,
and proved similar isomorphisms.
(Yamamoto (\cite{Yam21}) proved these two cohomology of ``tropical'' objects were isomorphic in some cases.)

We can also define \textit{tropical Dolbeault cohomology} groups $H_{\Trop,\Dol}^{p,q}$, which is isomorphic to tropical singular cohomology by the tropical de Rham theorem \cite[Theorem 1]{JSS17}.
This is based on Lagerberg's work (\cite{Lag12}) on \textit{superforms}, tropical analogs of usual bigraded differential forms on complex manifolds.
Chambert-Loir and Ducros's work (\cite{CLD12}) enable us to define tropical Dolbeault (or singular) cohomology for algebraic varieties over non-archimedean fields.

Recently, Ducros, Hrushovski, and Loeser \cite{DHL} 
constructed maps from the set of special kinds of one-parameter families of complex differential forms to the set of superforms for algebraic varieties over an extension of the field of complex numbers which has both archimedean and non-trivial non-archimedean valuations. They also proved equality of integrations of these two kinds of differential forms.

In this paper, we give maps from tropical $H_{\Trop,\Dol}^{p,q}(X)$ to usual Dolbeault cohomology groups $H^{p,q}(X(\C))$ using modifications of Ducros, Hrushovski, and Loeser's maps of differential forms 
in a simple setting, i.e., for a single smooth complex projective variety $X$. (We use the trivial non-archimedean valuation.)
We also prove these maps of two kinds of Dolbeault cohomology groups are compatible with Liu's tropical and usual cycle class maps.
For this, we give a 
new expression  of $\Q$-coefficient tropical $K$-theory $K_T^p$, introduced by the author (\cite{M20-1}).
(Remind 
$$H_{\Trop,\Dol}^{p,q}(X)\cong H^{q}(X_{\Zar},\K_T^p) \otimes \R$$
\cite[Theorem 1.3]{M20-2},
where $K_T^p$ is the Zariski sheafification of $K_T^p$.)
\begin{prp}\label{introduction K group another expression}
   Let $p \geq 0$ be an integer, and $L$  a finitely generated extension of $\C$.
 Then there exists an  isomorphism 
 $$ d(-\frac{1}{2\pi i} \log (-) ) \colon K^p_T(L/\C)
 \cong 
 \Q \langle -\frac{1}{2\pi i} \frac{d f_1}{f_1} \wedge \dots \wedge -\frac{1}{2\pi i} \frac{d f_p}{f_p} \rangle_{f_i \in L^{\times}} \subset \Omega^p_{\Kah}(L/\C), $$
 where $\Omega_{\Kah}$ is the K\"{a}hler differential.
\end{prp}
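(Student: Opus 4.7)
The plan is to define the map on generators by $\{f_1,\dots,f_p\} \mapsto \bigwedge_{i=1}^p -\frac{1}{2\pi i}\frac{df_i}{f_i}$ and then verify, in order, well-definedness on $K^p_T(L/\C)$, surjectivity onto the stated subspace (which will be tautological from the shape of the target), and injectivity.

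For well-definedness I would check that each defining relation of tropical $K$-theory maps to zero in $\Omega^p_{\Kah}(L/\C)$. Multilinearity transports through $d\log(fg) = d\log f + d\log g$; the Steinberg-type relation $\{a,1-a\} = 0$ lands on $\tfrac{da}{a} \wedge \tfrac{d(1-a)}{1-a} = -\tfrac{da \wedge da}{a(1-a)} = 0$; and the vanishing of symbols having a constant entry $c \in \C^{\times}$ is simply $d\log c = 0$. Any further relations specific to the tropical definition, for instance those arising from the underlying balancing or weight data, would be verified in the same manner. Surjectivity onto the stated $\Q$-subspace is then immediate from the way the target is written.

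The main obstacle is injectivity, where the analogy with Bloch--Kato and Gabber enters. My strategy would be induction on the transcendence degree $n = \trdeg_\C(L)$. The base case $n = 0$ is immediate since $\Omega^1_{\Kah}(\overline{\C}/\C) = 0$ kills the target while quotienting by constants kills the source. For the inductive step, I would fix a transcendence basis $t_1,\dots,t_n$ so that $L$ is a finite algebraic extension of $\C(t_1,\dots,t_n)$, and for each discrete valuation $v$ of $L$ trivial on $\C$ construct a ``residue'' map $\partial_v \colon K^p_T(L/\C) \to K^{p-1}_T(\kappa(v)/\C)$ compatible with the residue on logarithmic Kähler differentials. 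A class in the kernel would then have vanishing residues at every $v$, which via the inductive hypothesis forces it to be trivial. The hardest technical step will be the construction of these tropical residue maps and the verification of their compatibility with log-differential residues --- this is the tropical analog of the tame symbol in Milnor $K$-theory. Once that is in place, a standard devissage closes the proof; the characteristic-zero $\Q$-coefficient setting removes several subtleties of the original Bloch--Kato--Gabber argument (where $p$-torsion is central), but the backbone is the same.
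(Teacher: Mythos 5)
Your proposal takes a genuinely different route from the paper, and the difference matters because it creates a real gap. You treat $K^p_T(L/\C)$ as if it were presented by Steinberg-type relations, so that well-definedness reduces to verifying $d\log(fg)=d\log f + d\log g$, the vanishing of $\frac{da}{a}\wedge\frac{d(1-a)}{1-a}$, and the vanishing of $d\log c$ for $c\in\C^\times$. But tropical $K$-theory is \emph{not} defined by these relations: the paper's definition is $K^p_T(L/\C) = \underrightarrow{\lim}\, F^p(0,\Trop(\overline{\varphi(\Spec L)}))$, and the cited presentation (\cite[Corollary 5.4]{M20-1}) is $\wedge^p(L^\times)_\Q / J$ where $J$ consists of elements $f$ with $\wedge^p v(f)=0$ for \emph{every} valuation $v\in\ZR(L/\C)$. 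Showing that this valuation-theoretic $J$ lies in $\ker(d\log)$ is precisely the content of the proposition; your phrase ``any further relations specific to the tropical definition $\dots$ would be verified in the same manner'' is hand-waving at the entire difficulty. There is no algebraic manipulation analogous to the Steinberg computation that handles $J$ directly.

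Your injectivity argument via transcendence-degree induction and residue maps has a similar issue. Tropical $K$-theory is indeed a cycle module in Rost's sense, so residue maps exist, but your ``a class in the kernel would then have vanishing residues at every $v$, which via the inductive hypothesis forces it to be trivial'' presupposes a Gersten-type exactness (injectivity of $K^p_T(L/\C) \to \prod_v K^{p-1}_T(\kappa(v)/\C)$), which is a substantial result and not a standard d\'evissage. You would also need to prove compatibility between tropical residues and residues of log differentials, which again requires geometric input.

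The paper avoids all of this with a single geometric construction: choose a smooth affine model $X\subset\G_m^n$ with $\Spec L$ generic, take a Sch\"on compactification $\overline{X}\subset T_\Lambda$ with SNC boundary $D$ (Luxton--Qu), and observe that for $f\in\wedge^p M$, vanishing of the image of $f$ in $F^p(0,\Trop(X))$ is \emph{equivalent} to vanishing of the residues $\res_p(d(-\tfrac{1}{2\pi i}\log f))$ along the $p$-fold intersections of $D$, which by Peters--Steenbrink's weight theory is in turn equivalent to vanishing of the class of $d(-\tfrac{1}{2\pi i}\log f)$ in $H^p_{\sing}(X(\C),\Q)$. That chain of equivalences is both well-definedness and injectivity at once, with mixed Hodge theory doing the work your Gersten resolution would have to do. If you want to salvage your approach, you would first have to establish the Gersten exactness for $K_T^\bullet$ over $\C$ and the residue compatibility --- at which point you would be rebuilding a significant fraction of \cite{M20-1} and \cite{M20-2} rather than proving one corollary.
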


\begin{rem}
\cref{introduction K group another expression} is a tropical analog of the following result of torsion coefficients Milnor $K$-groups $K_M \otimes_\Z \F_l$: 
  for an integer $p \geq 0$ 
  and a field $L$ of characteristic $l >0$,
  we have 
 $$K^p_M(L) \otimes_\Z \F_l
 \cong 
 \F_l \langle \frac{d f_1}{f_1} \wedge \dots \wedge \frac{d f_p}{f_p} \rangle_{f_i \in L^{\times}} \subset \Omega^p_{\Kah}(L/\F_l) $$
  (Bloch-Kato \cite[Theorem 2.1]{BK86} and Gabber, independently).
\end{rem}

\begin{itemize}
  \item 
We put 
$(\mathscr{A}_{\Trop}^{p,*}, d'')$ the complex of sheaves of $(p,*)$-superforms
on  the  Berkovich analytic space $X^{\Ber}$ over $\C$ with the trivial valuation and
  $\Phi \colon X^{\Ber} \to X$ the map taking supports of valuations.
\item We put
$(\mathscr{D}^{p,*} ,\overline{\partial})$ the complex of sheaves of $(p,*)$-currents on the complex manifold $X(\C)$ and
  $\Psi \colon X(\C) \to X$ the natural map.
\end{itemize}
\begin{thm}\label{main theorem}
  There is a morphism
  $$\Trop^* \colon (\Phi_*\mathscr{A}^{p,*}_{\Trop},d'')\to (\Psi_*\D^{p,*}, \overline{\partial} )$$
  of complexes of sheaves on the Zariski site $X_{\Zar}$ 
extending the morphism
$$ d(-\frac{1}{2\pi i} \log (-) ) \colon \K^p_T 
\to \Omega^p_{\alg} $$
of sheaves on the Zariski site $X_{\Zar}$.
In particular,
  the diagram 
  $$\xymatrix{
	\CH^p(X) \otimes \Q \ar[r]^-{\tropcyc} \ar[dr]^-{\cyc} & H_{\Trop,\Dol}^{p,p}(X) \ar[d]^-{\Trop^*} \\
	 & H^{p,p}(X(\C)) ,
  }$$
  is commutative,
  where 
  $ \Trop^* $
  is the induced map,
  the map 
  $\cyc $
  is the usual cycle class map,
  and  
  $\tropcyc $ 
  is Liu's tropical cycle class map (\cite[Definition 3.6]{Liu20}).
\end{thm}

\begin{rem}
  For a superform $w$, the current $\Trop^*(w)$ is given as the weak limit  of differential forms
 $-\epsilon \log \lvert \cdot \rvert^*(w)$, see \cref{section analytic} for details. 
\end{rem}
\begin{rem}
In the case of toric varieties,
Burgos Gil, Gubler, Jell, and K\"{u}nnemann \cite{BGJK20} introduced a natural map of the similar kind.
They \cite{BGJK21} used it to compare complex and non-archimedean pluripotential theory on toric varieties.
\end{rem}

We also prove equalities of integrations similar to Ducros, Hrushovski, and Loeser's one (\cite{DHL}), but on generic (precisely, admissble) semi-algebraic subsets instead of complex algebraic subvarieties. 
For this purpose, 
we will generalize weighted tropicalizations $\wtTrop(V)$ of algebraic subvarieties to generic semi-algebraic subsets $V$.

\begin{thm}\label{intro comparison of two kinds of integrals}
  Let $\varphi \colon X \to T_{\Sigma}$ be a closed immersion to a smooth projective toric variety $T_{\Sigma}$.
  Let $V \in C_{p+q}(\varphi(X),\C)$ be an admissible singular semi-algebraic chain (\cref{definition of admissible semi-algebraic subsets})
 and $w \in \mathscr{A}_{\Trop}^{p,q}(\Trop(X))$ a superform.
 Then 
 we have 
 $$ \int_{\wtTrop ( V )} w =
 \lim_{\epsilon \to 0} 
 \int_V 
 -\epsilon \log \lvert \cdot \rvert^*(w). 
 $$
\end{thm}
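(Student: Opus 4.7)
The plan is to verify the identity by linearity and locality, reducing first to a single toric chart and to monomial test superforms $w$, then matching both sides to a common logarithmic integral formula of Hanamura--Kimura--Terasoma.

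First, I would observe that both sides of the asserted equality are $\R$-linear in $w$ and $\C$-linear in $V$, and are local on $X(\C)$ in the sense that they only depend on the germs of $w$ and $V$ along $\varphi(X)$. Using a partition of unity subordinate to the toric stratification of $T_{\Sigma}$, I can therefore assume that $V$ is supported in a single affine toric chart and that $w$ is a combination of elementary $(p,q)$-superforms, i.e., wedges of $d'\log|z_I|$ and $d''\log|z_J|$ for coordinates $z_i$ on the torus. Writing $-\epsilon \log|\cdot|^*(w)$ out in these coordinates, one obtains an explicit logarithmic form whose singularities occur precisely along the toric boundary strata $O(\sigma)$. The definition of $\wtTrop(V)$, via the expression of tropical $K$-theory as logarithmic differentials from \cref{introduction K group another expression} and via Hanamura--Kimura--Terasoma's face-map formalism on $AC_r(X(\C),\Q)$, will in the same coordinates produce weights on the tropicalized chain given by residues of the same logarithmic forms.

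The second step is the explicit limit computation. For a simplex piece of $V$ lying in the torus and a superform $w$ of bidegree $(p,q)$ with $p \ne q$, both sides vanish for symmetry reasons (on the right, via \cref{main theorem explicit}, in the limit $\Trop^*(w)$ is a current concentrated on strata with $p = q$; on the left, $\wtTrop$ takes values in the appropriate bigraded component of $C^{\Trop}_{p+q}$). For $p = q$, I would push forward the integrand via $-\epsilon\log|\cdot|$, using the standard Fubini decomposition of the torus into the real tropicalization base and the compact fiber $(S^1)^n$: the angular integrations produce a factor $(2\pi)^n$ that is cancelled by the normalization $-\tfrac{1}{2\pi i}\log$, and the remaining integral over the base is $\int_{-\epsilon\log|V|} w$. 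Rescaling by $\epsilon$ converts this into an integral against a probability measure concentrated, in the limit $\epsilon \to 0$, on $\Trop(V)$; for the pieces of $V$ sitting along a toric stratum $O(\sigma)$, the Hanamura--Kimura--Terasoma face map computes the residue of the logarithmic integrand and recovers exactly the weight that defines $\wtTrop$ on that stratum.

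The hardest step will be controlling the convergence as $\epsilon \to 0$ across the different toric strata simultaneously. Near the deep strata $O(\sigma)$ the pulled-back superform $-\epsilon\log|\cdot|^*(w)$ develops logarithmic singularities, and the crude bound on the integrand is not integrable with $\epsilon$-uniform constant. To handle this I would use Hanamura--Kimura--Terasoma's admissibility hypothesis on $V$ (and $\partial V$), which is exactly what guarantees that the logarithmic integrals behave well and admit a limit, combined with the Sch\"{o}n/tropically compact assumption used in the construction of $\wtTrop$ so that only finitely many strata contribute. Once the integrable majorant is in place, dominated convergence lets me pass to the limit, and the limit matches the weighted sum over the tropical strata that defines $\int_{\wtTrop(V)} w$. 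Finally, compatibility with subdivisions (which is how the target of $\wtTrop$ is defined modulo subdivisions) is automatic since both sides are invariant under refining the tropical polyhedral structure.
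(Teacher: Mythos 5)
The broad shape of your plan — reduce to a monomial $(p,q)$-superform, pass to polar coordinates, identify the limiting weights on the boundary strata via the Hanamura--Kimura--Terasoma face maps, and control the passage to the limit with dominated convergence plus the admissibility hypothesis — does agree with the paper. But there is a genuine gap in your second step that the paper does not have, namely the asserted reduction to the case $p = q$.

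You claim that for $p \neq q$ both sides vanish. For the left side, this is simply not true: by construction $\wtTrop(V)$ lies in $C^{\Trop}_{p+q}(\Trop(X),\C) = \bigoplus_{p'+q'=p+q} C^{\Trop}_{p',q'}$, and for \emph{each} cone $P \in \Sigma$ of dimension $q'$ the weight $\wtTrop(V)_P$ sits in $F_{p+q-q'}(P) \otimes \C$, so the tropical chain $\wtTrop(V)$ has nontrivial components in every bidegree $(p',q')$ with $p'+q'=p+q$, and pairing it with a $(p,q)$-superform picks out the $(p,q)$-component, which has no reason to vanish. For the right side, invoking \cref{main theorem explicit} is a category error: that theorem describes the \emph{current} $\Trop^*(w)$, i.e.\ its action on smooth test forms $\alpha \in \mathscr{A}^{d-p,d-q}(X(\C))$, whereas here you are integrating $-\epsilon\log\lvert\cdot\rvert^*(w)$ over a fixed semi-algebraic chain $V$, which is not a smooth test form, and the weak limit of currents does not control such integrals. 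The paper instead treats all $p,q$ uniformly: it writes $w = f(x)\, d'x_{I_0}\wedge d'x_{I_1} \otimes d''x_{I_0}\wedge d''x_{I_2}$ with $I_1, I_2$ of possibly different sizes, expands $-\epsilon\log\lvert\cdot\rvert^*(w)$ in polar coordinates, and decomposes over subsets $J_1 \subset I_1$, $J_2 \subset I_2$, tracking which summands carry positive powers of $\epsilon$ (and hence vanish in the limit) and which survive. This is precisely what makes the general $(p,q)$ statement work; your reduction to $p=q$ would need a new argument and, as it stands, is false.

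A second, softer issue: your Fubini picture of the angular integrations producing a global $(2\pi)^n$ factor presupposes that $V$ factors as (base)$\times (S^1)^n$, which a general admissible semi-algebraic chain does not. The paper avoids this by pushing forward along the explicit maps $\varphi_{J_1,J_2,\epsilon}$ (which record $-\epsilon\log r_i$, the radii $r_j$, and the angles $\theta_k$ as appropriate) and then applying dominated convergence to the resulting fiber-integrated densities $f_\epsilon$, with the integrable majorant coming from the boundary condition on $w$ together with \cref{integration on admissible set is well-defined} applied to the admissible chain. Your final paragraph correctly identifies that admissibility is the crucial input for the majorant, but without the explicit pushforward and the $J_1,J_2$-bookkeeping the argument is not complete.
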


\begin{cor}\label{intro comparison of Trop^* and wtTrop2}
 We assume that 
 $X \subset T_{\Sigma}$ is Sch\"{o}n and 
 every class in the tropical Dolbeault cohomology group $H_{\Trop}^{p,q}(\Trop(X))$ has a $d'$-closed representative.
Then weighted tropicalizations induces a map
$$\wtTrop  \colon 
H^{\sing}_{r}(X,\C) \to 
\bigoplus_{p+q=r} H^{\Trop}_{p,q}(\Trop(X),\C) $$
of homology groups,
and it is the dual map of 
$$\Trop^*  \colon 
\bigoplus_{p+q=r} H_{\Trop,\Dol}^{p,q}(\Trop(X))\otimes_\R \C \to H_{\sing}^{r}(X,\C).$$
\end{cor}
\begin{rem}
 The first assumption in \cref{intro comparison of Trop^* and wtTrop2} is needed to ensure the existence of many admissble semi-algebaic subsets. This assumption is not restrictive because of the existence of Sch\"{o}n open subvarieties \cite[Theorem 1.4]{LQ11}.
 The second assumption is natural as tropical Dolbeault cohomology is analog of the usual Dolbeault cohomology of compact K\"{a}hler manifolds, although the correctness is still not known for most cases.
\end{rem}

In study of mirror symmetry, cycles of complex algebraic varieties constructed from tropical ones are considered, see
\cite{RS20}, \cite{Rud20},  and \cite{RZ}.

Ducros, Hrushovski, and Loeser's theory is based on non-standard models.
Our main tools are  Hironaka's resolution singularities, usual limit arguments of Lebesgue integration, 
and Hanamura, Kimura, and Terasoma's study (\cite{HKT15} and \cite{HKT20}) of logarithmic integrals on admissible semi-algebraic subsets.

The organization of this paper is as follows.
In Section 2, we fix notation and terminology.
In \cref{sectrop}, we remind tropicalization of algebraic varieties and Sch\"{o}n compactifications.
In \cref{sec;trocoho;troK}, we remind superforms, tropical Dolbeault cohomology, and tropical cohomology.
In \cref{section algebraic},
we remind tropical $K$-groups, 
give another expression of them over $\C$ (\cref{introduction K group another expression}), 
and show that this expression
induces maps from tropical cohomology to singular cohomology
which are compatible with  Liu's tropical and the usual cycle class maps.
In \cref{section analytic},
we will construct the map
  $$\Trop^* \colon \mathscr{A}^{p,*}_{\Trop}(X)\to \D^{p,*}(X(\C)).$$
To show that it is well-defined, we will use \cref{introduction K group another expression} and a standard limit argument of the Lebesgue integration.
In \cref{section admissible and logarithmic integrals}, 
we remind Hanamura, Kimura, Terasoma's works on admissble semi-algebaic subsets and logarithmic integrals on them (\cite{HKT15} and \cite{HKT20}).
In \cref{Section Real semi-algebraic construction},
based on Hanamura, Kimura, Terasoma's results,
we construct weighted tropicalizations of admissble semi-algebraic subsets, and prove \cref{intro comparison of two kinds of integrals}.
We also prove that weighted tropicalizations induces maps between $\Q$-coefficient homology groups.

\section{Notation and terminology}
\begin{itemize}
  \item $\Omega_{\alg}^p$: the Zariski sheaf of algebraic differential forms
  \item $\Omega_{\hol}^p$: the sheaf of holomorphic differential forms on complex manifolds
  \item $\Omega_{\Kah}^p$: the K\"{a}hler differential
  \item $\mathscr{A}^{p,q}$: the set of smooth $(p,q)$-forms on complex manifolds
  \item $\mathscr{D}^{p,q}$: the set of $(p,q)$-currents, i.e., continuous linear maps $\mathscr{A}^{p,q} \to \C$
  \item $\mathscr{A}_{\Trop}^{p,q} $: the sheaf of $(p,q)$-superforms on tropical varieties or Berkovich analytic spaces
  \item $H^{p,q}$ : the usual Dolbeault cohomology on complex manifolds
  \item $\partial$: the boundaries  (e.g., of semi-algebraic chains, tropical chains) 
  \item For a finite set $I=(i_1,\dots,i_r) \subset \Z^r $,
  we use multi-index to simplify notations.
  For example,
  \begin{align*}
    x^I:= & x_{i_1} \dots x_{i_r}, \\
  d x_I := & d x_{i_1} \wedge \dots \wedge d x_{i_r}, \\
   d (-\frac{1}{2 \pi i} \log z_I):=& 
   d (-\frac{1}{2 \pi i} \log z_{i_1})\wedge \dots
   \wedge 
   d (-\frac{1}{2 \pi i} \log z_{i_r}).
  \end{align*}
  \item We denote the closure of a subset $A \subset B$ in a topological space $B$ by $\overline{A}^{B}$.
  \item For a rational fan $\Sigma$, we put $T_{\Sigma}$ be the toric variety corresponding to $\Sigma$.
  \item For a $\Z$-module $L$ and an algebra $A$, we put $L_A:= L \otimes_\Z A$.
\end{itemize}

In this paper, fans are polyhedral and cones are strongly convex polyhedral cones.
We assume that fans and cones are rational.

Throughout this paper,
let $M$ be a free $\Z$-module of rank $n$ and  $N:= \Hom_{\Z} (M,\Z)$

\section{Tropicalizations of algebraic varieties}\label{sectrop}
In this section, we 
recall basic properties of fans
(Subsection \ref{subsec;fan}),
\textit{tropicalizations} of complex algebraic varieties 
(Subsection \ref{subsec;trop;Ber}) 
and Sch\"{o}n compactifications (Subsection \ref{subsec;trop;triv;val}).
Let $\Sigma$ be a rational fan in $N_\R$.
For a cone $\sigma \in \Sigma$, we put $N_{\sigma,\R}:= \Hom ( \sigma^{\perp} , \R)$,
where $\sigma^{\perp} \subset M_\R$ is the subset cosisting of elements orthogonal to $\sigma$.

\subsection{Fans}\label{subsec;fan}
In this subsection,
we recall the partial compactification $\bigsqcup_{\sigma \in \Sigma } N_{\sigma ,\R}$ of $\R^n$
and define generalizations of fans in it.

We define a topology on the disjoint union  $\bigsqcup_{\sigma \in \Sigma } N_{\sigma,\R}$  as follows.
We extend the canonical topology on $\R$  to that on $\R \cup \{ \infty \}$ so that  $(a, \infty]$ for $a \in \R$ are a basis of neighborhoods of $\infty$.
We also extend  the addition on $\R$  to that on $\R \cup \{ \infty \}$ by $a + \infty = \infty$ for  $a \in \R \cup \{\infty\}$.
 We consider the set of semigroup homomorphisms $\Hom ( \sigma^{\vee}, \R \cup \{ \infty \}) $  as a topological subspace of $(\R \cup \{ \infty \})^{ \sigma^{\vee}} $.
We define a topology on $\bigsqcup_{ \substack{\tau \in \Sigma \\ \tau \preceq \sigma  } } N_{\tau,\R}$ by the canonical bijection
$$\Hom ( \sigma^{\vee}, \R \cup \{ \infty \}) \cong \bigsqcup_{ \substack{\tau \in \Sigma \\ \tau \preceq \sigma  } } N_{\tau,\R}.$$
 Then we define a topology on  $\bigsqcup_{\sigma \in \Sigma } N_{\sigma,\R}$ by glueing the topological spaces $ \bigsqcup_{\substack{\tau \in \Sigma \\ \tau \preceq \sigma  }} N_{\tau,\R} $ together.

\begin{dfn}
For a cone $\sigma\in \Sigma $ and a   cone $C\subset N_{\sigma,\R}$,
we call its closure $P:=\overline{C}$ in  $ \bigsqcup_{\sigma \in \Sigma } N_{\sigma,\R}$ a  \emph{ cone} in $\bigsqcup_{\sigma \in \Sigma } N_{\sigma,\R}$.
In this case, we put $\relint(P):=\relint(C)$, and call it the \emph{relative interior} of $P$.
We put $\dim(P):=\dim(C)$.
\end{dfn}

Let $\sigma_P \in \Sigma $ be the unique cone such that $\relint(P)\subset N_{\sigma_P,\R}$.
A subset $Q$ of a  cone $P$ in $\bigsqcup_{\sigma \in \Sigma } N_{\sigma,\R}$ is called a \emph{face} of $P$ if it is
the closure of the intersection $P^a \cap  N_{\tau,\R}$
in $ \bigsqcup_{\sigma \in \Sigma } N_{\sigma,\R}$
for some $ a \in \sigma_P\cap M$ and some cone $\tau \in \Sigma $,
where $P^a$ is the closure of
$$\{x \in P\cap  N_{\sigma_P,\R} \mid x(a) \leq  y(a) \text{ for any } y \in P \cap  N_{\sigma_P,\R} \}  $$
in $ \bigsqcup_{\sigma \in \Sigma } N_{\sigma,\R}$.
A finite collection $\Lambda$ of   cones in $ \bigsqcup_{\sigma \in \Sigma } N_{\sigma,\R}$ is called  
a  \emph{fan} if it satisfies the following two conditions:
\begin{itemize}
\item For all $P \in \Lambda$, each face of $P$ is also in $\Lambda$.
\item For all $P,Q \in \Lambda$, the intersection $P \cap Q$ is a face of $P$ and $Q$.
\end {itemize}
We call the union
$$\bigcup_{P\in\Lambda} P \subset \bigsqcup_{\sigma \in \Sigma } N_{\sigma,\R} $$
the \emph{support} of $\Lambda$.
We denote it by $\lvert \Lambda \rvert$.
We also say that $\Lambda$ is a  \emph{fan} \emph{structure} of $\lvert \Lambda \rvert $.

\subsection{Tropicalizations of complex algeraic varieties}\label{subsec;trop;Ber}
We recall basics of \textit{tropicalizations} of complex algebraic varieties.
Let $T_{\Sigma}$ the normal toric variety over $\C$ associated to $\Sigma$. (See \cite{CLS11} for basic notions and results on toric varieties.)
Remind that there is a natural bijection between the cones $\sigma \in \Sigma$ and the torus orbits $O(\sigma)$ in $T_{\Sigma}$.
The torus orbit $O(\sigma)$ is isomorphic to the torus $ \Spec \C[M  \cap \sigma^{\perp}].$
We fix an isomorphism $M \cong \Z^n$, and identify  $\Spec \C[M]$ and $\G_m^n$.

Let $X \subset T_{\Sigma}$ be a closed algebraic variety.
Let $\epsilon >0$ and
$$-\epsilon \log \lvert \cdot \rvert \colon O(\sigma)(\C) \rightarrow N_{\sigma,\R} = \Hom(M \cap \sigma^{\perp} ,\R)$$
the map given by 
$$(x_1,\dots,x_{n-\dim \sigma}) \mapsto (-\epsilon \log \lvert x_1 \rvert, \dots, -\epsilon \log \lvert x_{n-\dim n}\rvert ),$$
here we fix a coordinate of $O(\sigma) \cong \G_m^{n-\dim \sigma}$. (This map is independent of the choice of the coordinate.)
We put 
$$\Trop(X \cap O(\sigma)) := \lim_{\epsilon \to 0} -\epsilon \log \lvert X \rvert \subset N_{\sigma,\R} ,$$
here the limit is the Hausdorff limit by the ususal metric of the Euclid space $N_{\sigma,\R} \cong \R^{n - \dim \sigma}$.
We put 
$$\Trop (X) := \bigsqcup_{\sigma \in \Sigma} \Trop(X \cap O(\sigma)) \subset \bigsqcup_{\sigma \in \Sigma} N_{\sigma,\R} (= \Trop(T_{\Sigma})). $$
We call it the \textit{tropicalization} of $X$.
The tropicalization $\Trop(X)$ is a finite union of $\dim(X)$-dimensional  rational cones by \cite[Theorem A]{BG84}.
There is another construction of $\Trop(X)$ by non-archimedean geometry and the trivival valuation of $\C$; see \cite{Jon16}.

For a toric morphism $\psi \colon T_{\Sigma'} \to T_{\Sigma}$,
there exists a natural morphism $\Trop(T_{\Sigma'}) \to \Trop(T_{\Sigma})$.
We also denote it by $\psi \colon \Trop(T_{\Sigma'}) \to \Trop(T_{\Sigma})$.
The restriction $ \psi|_{N_{\sigma'}} \colon N_{\sigma'} \to N_{\sigma}$ of it to each orbits is a linear map.
When there are closed subvarieties $Y \subset T_{\Sigma'} $ and $X \subset T_{\Sigma}$ such that 
$\psi(Y) \subset X$ (resp. $\psi(Y) = X$),
we have $\psi(\Trop(Y)) \subset \Trop(X)$
(resp. $\psi(\Trop(Y)) = \Trop(X)$).

\subsection{Sch\"{o}n compactifications}\label{subsec;trop;triv;val}
In this subsection, we shall recall   Sch\"{o}n compactifications.
See \cite{Tev07}, \cite{LQ11}, and \cite[Chapter 6]{MS15} for details.
Let $X \subset \G_m^n=\Spec \C[M]$ be a closed subvariety over $\C$.

\begin{dfn}\label{dfn;trop;cpt}
The closure $\overline{X}$ in the smooth toric variety $T_{\Sigma}$ associated with a unimodular fan $\Sigma$ in $N_\R$ is 
called a \emph{Sch\"{o}n compactification}
if the multiplication map
$$ \G_m^n \times \overline{X} \to T_{\Sigma} $$
is smooth and $\overline{X} $ is proper.
\end{dfn}

An affine variety $U$ is called a very affine variety if there exists a closed immersion $U \to \G_m^r$ to a torus $\G_m^r$.
In this case, we call a morphism $U \to \Spec \C[M']$ to a torus $\C[M']$ an intrinsic embedding if the induced morphism $M \to \O(U)^{\times}/ \C^{\times}$ of a lattice is an isomorphism.

Luxton and Qu proved the following  using resolution of singularities.
\begin{thm}[{Luxton-Qu \cite[Theorem 1.4]{LQ11}}]\label{existence of Schon variety}
 There exists an open subvariety $U \subset X$,
 an intrinsic embedding 
 $U \to \G_m^r$,
 and a fan $\Lambda$ in $\R^r$
 such that 
 $\overline{U} \subset T_{\Lambda}$ is Sch\"{o}n compact,
 where $T_{\Lambda}$ is the troic variety associated with a fan $\Lambda$.
\end{thm}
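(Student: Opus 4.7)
The overall strategy is to combine Tevelev's tropical compactification theorem with Hironaka's embedded resolution of singularities: Tevelev produces a tropical compactification in some ambient toric variety, resolution yields a smooth proper model whose boundary is simple normal crossings (SNC), and the Sch\"{o}n condition is then forced by enlarging the ambient torus so that its boundary strata match the SNC strata.

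First, by Tevelev's theorem, fix a rational fan $\Sigma_0$ in $N_\R = \R^n$ such that $\overline{X} \subset T_{\Sigma_0}$ is a tropical compactification. The multiplication map $\G_m^n \times \overline{X} \to T_{\Sigma_0}$ is faithfully flat, so the only obstruction to Sch\"{o}n-ness is failure of smoothness; this happens along the singular locus of $\overline{X}$ and along loci where $\overline{X}$ meets the toric boundary non-transversally. Shrink $X$ to an open subvariety $U_0 \subset X$ contained in the smooth locus of $\overline{X}$, and apply Hironaka's embedded resolution to the pair $(\overline{X}, \overline{X}\setminus U_0)$ inside a smooth refinement of $T_{\Sigma_0}$. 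This produces a smooth proper variety $Y$ with a birational morphism $Y \to \overline{X}$ that is an isomorphism over an open $U \subset U_0$, and such that $D := Y \setminus U = D_1 \cup \cdots \cup D_s$ is an SNC divisor.

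Next, build the enlarged torus embedding. After a further shrinking of $U$, one can arrange that the classes $[D_1], \dots, [D_s]$ together with the restrictions of the original coordinates generate $\mathrm{Pic}(Y)$, so that there exist regular functions $f_1, \dots, f_s \in \O(U)^\times$ whose divisors on $Y$ are supported on $D$ and form a basis of the $\Z$-span of the $[D_j]$'s modulo principal divisors. Together with the original coordinate functions $x_1, \dots, x_n$ coming from $U \subset X \subset \G_m^n$, the tuple $(x_1, \dots, x_n, f_1, \dots, f_s)$ defines a closed immersion $\iota \colon U \to \G_m^r$ with $r := n + s$ (after possibly one more shrinking). Take $\Lambda$ to be the fan in $\R^r$ whose rays correspond to the boundary components $D_j$, whose cones encode their mutual incidences as an SNC divisor, and which also incorporates the structure of $\Sigma_0$; by construction the closure of $\iota(U)$ in $T_\Lambda$ is $Y$.

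Sch\"{o}n-ness of $\overline{U} \subset T_\Lambda$ then follows from the SNC property of $D$: \'{e}tale-locally at a depth-$k$ boundary stratum $Y \cap O(\sigma)$, the variety $Y$ is a product of that stratum with an affine space sitting inside a toric chart as a coordinate subspace, which makes $\G_m^r \times Y \to T_\Lambda$ smooth. The main obstacle, and the technical heart of Luxton--Qu, is in the third paragraph: one must arrange that the orders of vanishing of the chosen $f_j$'s along the $D_k$'s assemble into a unimodular collection of rays that are compatible with the SNC stratification of $D$, and this typically requires iterating between toric refinements of $\Sigma_0$ and further blow-ups of $\overline{X}$ until both combinatorial structures align.
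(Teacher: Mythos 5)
The paper does not prove this statement at all: it is quoted verbatim as a theorem of Luxton--Qu (\cite[Theorem 1.4]{LQ11}), with only the one-line indication that their proof uses resolution of singularities. So there is no ``paper proof'' to compare against; I can only assess your sketch on its own terms. Your overall strategy (Tevelev's tropical compactification, then Hironaka to get a smooth proper model with simple normal crossings boundary, then enlarge the torus so the toric boundary matches the SNC strata) does follow the same general route as the cited Luxton--Qu argument.

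However, your third paragraph contains a genuine error. You claim that ``after a further shrinking of $U$, one can arrange that the classes $[D_1], \dots, [D_s]$ together with the restrictions of the original coordinates generate $\mathrm{Pic}(Y)$.'' This is impossible whenever $\mathrm{Pic}^0(Y) \neq 0$: the Picard group of a smooth proper variety sits in an extension of the finitely generated N\'eron--Severi group by the abelian variety $\mathrm{Pic}^0(Y)$, and neither shrinking $U$ nor further blow-ups kill $\mathrm{Pic}^0$. More to the point, the condition is not what is needed. What Luxton--Qu actually use is the \emph{intrinsic torus} $T_U := \Spec \C[\O(U)^\times/\C^\times]$: by the excision sequence
$$ 0 \to \O(U)^\times/\C^\times \to \bigoplus_i \Z D_i \to \mathrm{Pic}(Y) \to \mathrm{Pic}(U) \to 0, $$
the unit lattice $M = \O(U)^\times/\C^\times$ is automatically identified with the kernel of the map $\bigoplus_i \Z D_i \to \mathrm{Pic}(Y)$, and each boundary divisor $D_i$ gives a divisorial valuation $v_i \in N = \Hom(M,\Z)$; no generation hypothesis on $\mathrm{Pic}(Y)$ is required. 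The actual technical content is then to show that the cones $\R_{\geq 0}\langle v_i \rangle_{i \in S}$ (for $S$ such that $\bigcap_{i \in S} D_i \neq \emptyset$) assemble into an honest fan $\Lambda$ with $\overline{U}^{T_\Lambda} = Y$, and your final paragraph explicitly concedes that this step --- the combinatorial heart of the argument --- is not carried out. So the sketch is incomplete by its own admission and also routes through a hypothesis that cannot generally be satisfied.
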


\begin{lem}\label{rem;trop;cpt;property}
Let $\overline{X} \subset T_{\Sigma}$  be a Sch\"{o}n compactification of $X \subset \G_m^n$.
\begin{enumerate}
\item The fan $\Sigma$ is a fan structure of $\Trop(X)\subset N_\R$ \cite[Proposition 2.5]{Tev07}.
\item For any refinement $\Sigma'$ of $\Sigma$, the closure of $X$ in $T_{\Sigma'}$ is also a Sch\"{o}n compactification \cite[Proposition 2.5]{Tev07}.
\item The intersection of $\overline{X}$ and the toric divisors are simple normal crossing divisors of $\overline{X}$.
\end{enumerate}
\end{lem}

\section{Tropical cohomology and superforms}\label{sec;trocoho;troK}
In this section, 
we recall 
\textit{superforms} and \textit{tropical Dolbeault cohomology} (\cref{subsection superforms and tropical Dolbeault cohomology}),
\textit{tropical cohomology}
(Subsection \ref{subsec;trop;cohomology;fan}).

Remind that $M$ is a free $\Z$-module of finite rank and 
we put $N:=\Hom(M,\Z)$.
Let $T_{\Sigma}$ be the toric variety over $\C$ associated with a fan $\Sigma$ in $N_\R$.

\subsection{Superforms and tropical Dolbeault cohomology}\label{subsection superforms and tropical Dolbeault cohomology}
In this subsection,
we recall superforms and tropical Dolbeault cohomology groups.
We start by recalling the definitions of superforms for open subsets of $\R^r$ due to Lagerberg \cite{Lag12}. 

\begin{dfn}
Let $U \subset \R^n$ be an open subset. We denote by $\mathscr{A}^q(U)$ the space of smooth differential forms of degree $q$ on $U$.
The space of \textit{$(p, q)$-superforms} on $U$ is defined as
\begin{align*}
\mathscr{A}_{\Trop}^{p,q}(U) := \mathscr{A}_{\Trop}^p(U) \otimes_{C^{\infty}(U)} \mathscr{A}_{\Trop}^q(U) .
\end{align*}
\end{dfn}
We put 
\begin{align*}
d'x_{i_1} \wedge \dots \wedge d'x_{i_p} \otimes d''x_{j_1} \wedge \dots \wedge d''x_{j_q}
:&=   (dx_{i_1} \wedge \dots \wedge dx_{i_p}) \otimes_\R (dx_{j_1} \wedge \dots \wedge dx_{j_q}) \\
&\in \mathscr{A}_{\Trop}^p(U) \otimes_{C^{\infty}(U)} \mathscr{A}_{\Trop}^q(U), 
\end{align*}
where $x_1,\dots, x_n$ is the coordinate of $\R^n$.
For simplicity, for $I = \{ i_1 , \dots i_p\} $,
we put 
$$d'x_I  :=
d'x_{i_1} \wedge \dots \wedge d'x_{i_p} , \qquad
d''x_I  :=
d''x_{i_1} \wedge \dots \wedge d''x_{i_p} .$$
We put 
\begin{align*}
d'' \colon \mathscr{A}_{\Trop}^{p,q}(U)  \rightarrow \mathscr{A}_{\Trop}^{p,q+1}(U),
\end{align*}
a differential operator
given by $ \text{id} \otimes d$, where $d$ is the usual 
differential operator on the space $\mathscr{A}^*(U)$ of smooth differential forms.  We have
\begin{align*}
d'' \left( \sum_{I,J} \alpha_{I,J} d'x_I \otimes d''x_J \right) =  \sum_{I,J} \sum_{i = 1}^{n} \frac { \partial \alpha_{I,J} } {\partial x_i} d'x_I \otimes d''x_i \wedge d''x_J,
\end{align*} 
where the coefficients $\alpha_{I,J} \in C^\infty(U)$ are smooth functions. 
In the same way, we put 
\begin{align*}
d' \colon \mathscr{A}_{\Trop}^{p+1,q}(U)  \rightarrow \mathscr{A}_{\Trop}^{p,q}(U),
\end{align*}
a differential operator
given by $ (-1)^q d \otimes \text{id} $.

\begin{rem}
Note that sign of our differential $d''$ is  different from  that in \cite{JSS17}. 
We adapt ours so that differential are  compatible with the coboundary map of tropical cochains via integrations,
see \cite[Subsection 5.4]{M20-2}.
\end{rem}

We define  a wedge product of superforms 
\begin{align*}
\wedge \colon \mathscr{A}_{\Trop}^{p,q}(U) \times \mathscr{A}_{\Trop}^{p',q'}(U) &\rightarrow \mathscr{A}_{\Trop}^{p+p',q+q'}(U) \\
(\alpha, \beta) &\mapsto \alpha \wedge \beta,
\end{align*}
  by 
\begin{align*}
(\alpha_{I,J} d'x_I \otimes d''x_J ) \wedge ( \beta_{I',J'} d'x_{I'} \otimes d''x_{J'}) :=(-1)^{pq'}  \alpha_{I,J} \beta_{I',J'} d'x_{I} \wedge d'x_{I'} \otimes d''x_{J} \wedge d''x_{J'}.
\end{align*}

\begin{dfn}\label{def:forms}
Let $U \subset \Trop(T_{\Sigma})$ be an open subset. 
A \textit{$(p,q)$-superform $\alpha$ on $U$} is given by a collection of superforms $(\alpha_\sigma)_{\sigma \in \Sigma}$ 
such that
 \begin{enumerate}
\item    
$$\alpha_\sigma  \in \mathscr{A}_{\Trop}^{p,q}(U \cap \Trop(O(\sigma)))$$ 
for $\sigma \in \Sigma$, 
\item 
for $\sigma \in \Sigma$ and a point $x \in U \cap O(\sigma)$, 
there exists a neighborhood $U_x$ of $x$ in $U$ 
such that 
for a cone $\tau \subset \sigma \ (\tau \in \Sigma)$,
we have 
$$\pi_{\sigma,\tau}(U_{x} \cap \Trop(O(\tau))) \subset U_{x} \cap \Trop(O(\sigma))$$
and  
$$\pi^*_{\sigma,\tau}( \alpha_\sigma |_{U_{x} \cap \Trop(O(\sigma))})|_{U_x \cap \Trop(O(\tau))} = \alpha_\tau|_{U_{x} \cap \Trop(O(\tau))},$$ 
where 
$$\pi_{\sigma,\tau} \colon \Trop (O(\tau)) =N_{\tau,\R} \to N_{\sigma,\R}=\Trop(O(\sigma))$$ 
is the natural map given by the restrictions.
\end{enumerate}
We call the second condition \emph{the boundary condition}.
We denote the set of $(p,q)$-superforms on $U$ by $\mathscr{A}_{\Trop}^{p,q}(U).$

For $\alpha \in \mathscr{A}_{\Trop}^{p,q}(U)$,
we put 
$$d'' \alpha:=(d''\alpha_\sigma)_{\sigma \in \Sigma} \in\mathscr{A}_{\Trop}^{p,q+1}(U).$$
\end{dfn}

\begin{dfn}
Let $C$ be a cone in $\Trop(T_{\Sigma})$ and $\Omega \subset |C|$  an open subset. 
We put $\mathscr{A}_{\Trop}^{p,q}(\Omega)$
the space of the following equivalence classes of $(p,q)$-superforms $\alpha$ which is defined on an open neighborhood $U_{\alpha} \subset \Trop(T_{\Sigma})$ of $\Omega$.
Two superforms are equivalent if and only if 
for any cone $P \in C$,
the restrictions of the two forms 
as elements of $$\mathscr{A}^p (\relint(P))\otimes_{C^{\infty}(\relint(P))} \mathscr{A}^q(\relint(P))$$
are the same,
where we consider $\relint(P)$ as an open subset of the affine span $\Aff(\relint(P)) \cong \R^{\dim P}$.

The differential $d'' \colon 
\mathscr{A}_{\Trop}^{p,q}(\Omega) \to \mathscr{A}_{\Trop}^{p,q+1}(\Omega)$
is well-defined.
\end{dfn}  

\begin{dfn}
  Let $X \subset T_{\Sigma}$ be a complex projective algebraic subvariety.
  We put 
  \begin{align*}
  &H_{\Trop,\Dol}^{p,q}(\Trop(X))\\
  :=&\Ker 
(d'' \colon 
\mathscr{A}_{\Trop}^{p,q}(\Trop(X)) \to \mathscr{A}_{\Trop}^{p,q+1}(\Trop(X)))
/
\Im(d'' \colon 
\mathscr{A}_{\Trop}^{p,q-1}(\Trop(X)) \to \mathscr{A}_{\Trop}^{p,q}(\Trop(X))),
  \end{align*}
called the \emph{tropical Dolbeault cohomology group}.

We put 
  $$H_{\Trop,\Dol}^{p,q}(X):=
  \underrightarrow{\lim}_{\varphi} H_{\Trop,\Dol}^{p,q}(\varphi(\Trop(X))),$$
  where $\varphi \colon X \to T_{\Sigma'}$
  runs through all closed immersions to toric varieties.
  This is also called  the \emph{tropical Dolbeault cohomology group}.
\end{dfn}

When we consider $\C$ as a trivially valued field,
by compactness of the Berkovich analytic space $X^{\Ber}$,
our tropical Dolbeault cohomology coincides with those in \cite{Jel16}, \cite{Liu20}, and \cite{M20-2}.

\subsection{Tropical homology}\label{subsec;trop;cohomology;fan}
We remind tropical (co)homology of fans introduced by Itenberg-Katzarkov-Mikhalkin-Zharkov \cite{IKMZ19}.
See also \cite[Section 3]{JSS17}.

Let $\Lambda $ be a fan in $\Trop(T_{\Sigma})$. 
Remind that for $P \in \Lambda$,
we putted $\sigma_P \in \Sigma$ the cone such that $\relint(P) \subset  N_{\sigma_P,\R}$.
We put
$$\Span(P) :=\Span_\Q (P \cap N_{\sigma_P,\Q} )$$ the $\Q$-linear subspace of $\Trop(O(\sigma_P))$ spanned by $P \cap N_{\sigma_P,\Q}$,
$$F_p(P,\Lambda):= F_p(P, \lvert \Lambda \rvert ):= \sum_{\substack{P' \in \Lambda\cap \Trop(O(\sigma_P))\\ \relint(P) \subset P'}} \wedge^p \Span (P') \subset \wedge^p N_{\sigma_P,\Q}$$
for  a non-negative integer $p \geq 0$,
 and
$$F^p({0},\lvert \Lambda \rvert ):= \wedge^p  M_\Q / \{f \in \wedge^p M_\Q \mid \alpha(f)=0 \ (\alpha \in F_p({0},\lvert \Lambda \rvert ))\} .$$

For $P_1,P_2 \in \Lambda$ with $P_2 \subset P_1$,
there exists a natural  map 
$ i_{P_2 \subset P_1} \colon F_p(P_1) \hookrightarrow F_p(P_2).$

\begin{dfn}\label{deftrocoho}
\begin{enumerate}
\item For every cone $P \in \Lambda$,
we put  $C_q(P)$ the free $\Q$-vector space generated by continuous maps $\gamma \colon \Delta^q \rightarrow B \cap P$ from the standard $q$-simplex $\Delta^q$
such that $$\gamma(\relint(\Delta^q)) \subset  \relint(P)$$
and the image of the relative interior of each face  of $\Delta^q$ is contained in the relative interior of a face of $P$.
We put
$$C_{p,q}^{\Trop}(\Lambda) :=\oplus_{P \in \Lambda} F_p(P,\Lambda) \otimes_\Q C_q( P).$$
We call its elements \emph{tropical $(p,q)$-chains} on $(B,\Lambda)$. \label{enu:def:tro:coho:fan}
\item  For $\gamma \in C_q( P)$, we denote the usual boundary by $\partial(\gamma):=\sum_{i=0}^q (-1)^i \gamma^i$.
For each $v \otimes \gamma \in F_p(P,\Lambda) \otimes C_q( P)$, we put
$$\partial(v \otimes \gamma):= \sum_{i=0}^q (-1)^i i_{P_{\gamma,i} \subset P}(v) \otimes \gamma^i \in C_{p,q-1}(\Lambda),$$
where $P_{\gamma,i} \in \Lambda$ is a face of $P$ such that
$$\gamma^i(\relint(\Delta^{q-1}) )  \subset \relint(P_{\gamma,i}).$$
We obtain complexes $(C_{p,*}^{\Trop}(\Lambda),\partial)$.
\item We define the \emph{tropical homology groups} to be
$$H_{p,q}^{\Trop}(\lvert \Lambda \rvert):=H_{p,q}^{\Trop}(\Lambda):= H_q(C_{p,*}^{\Trop}(\Lambda),\partial).$$
We put $(C^{p,*}_{\Trop} ( \Lambda), \delta)$ the dual complex of $(C_{p,*}^{\Trop}(\Lambda),\partial)$.
We call its cohomology groups
$$H^{p,q}_{\Trop}(\lvert \Lambda \rvert):=H_{\Trop}^{p,q}(\Lambda) := H^q(C^{p,*}_{\Trop}(\Lambda),\delta)$$
the \emph{tropical cohomology groups} of $\Lambda$.
\end{enumerate}
\end{dfn}
We put 
$$C_r^{\Trop}:=\bigoplus_{p+q=r} C_{p,q}^{\Trop}, \quad
H_r^{\Trop}:=\bigoplus_{p+q=r} H_{p,q}^{\Trop} .$$

Tropical (co)homology does not depend on the choice of the fan structure $\Lambda$ of $\lvert \Lambda \rvert$ \cite[Proposition 2.8]{MZ13} (see \cite[Section 3]{JSS17} for proof).

For a smooth quasi-projective algebraic variety $X$ over $\C$,
we put 
  $$H_{\Trop}^{p,q}(X):=
  \underrightarrow{\lim}_{\varphi} H_{\Trop}^{p,q}(\varphi(\Trop(X))),$$
  where $\varphi \colon X \to T_{\Sigma'}$
  runs through all closed immersions to toric varieties.
  This is also called  the \emph{tropical  cohomology group}.

\begin{dfn}\label{dfn;semi;alg;map}
Let $S \subset \R^m$ be a semi-algebraic set.
A map $\varphi \colon  
S \to \Trop(T_\Sigma)$
is said to be \emph{semi-algebraic}
if and only if
the graph $$\Gamma_\varphi \subset \R^{m} \times \Trop(T_\sigma) = \R^m \times N_{\sigma,\R}$$
is semi-algebraic in the usual sense for any cone $\sigma \in \Sigma$.
\end{dfn}

We have a natural isomorphism
$H^{p,q}_{\Trop} (\Lambda) \cong H^{p,q}_{\Trop,\sem-\alg} (\Lambda)$
 \cite[Lemma 5.16]{M20-2}.

\begin{dfn}
  Let $X \subset T_{\Sigma}$ be a smooth projective algebraic varieties.
  Let $\Lambda$ be a fan structure of $\Trop(X)$ and $P\in \Lambda$ a cone.
  Let $v \otimes \gamma \in F_p(P,\Lambda) \otimes C_q( P)$  be a semi-algebraic tropical $(p,q)$-chain (i.e, $\gamma$ is semi-algebraic)
  and 
  $$\alpha d'x_I \otimes d''x_J \in
  \mathscr{A}_{\Trop}^{p,q}(\Trop(X)),$$
  where $\alpha$ is a smooth function and $x_1,\dots,x_n$ is a coordinate of $N_\R $.
  We put 
  $$ \int_{v \otimes \gamma} \alpha_{I,J}d'x_I\otimes d''x_J 
  := \langle d x_I , v \rangle \cdot \int_\gamma \alpha_{I,J} d x_J,$$
  where
  we consider 
  $dx_I \in F^p(\{0\},N_{\sigma_P,\R})$,
  the pairing $\langle d x_I , v \rangle$ is the natural one,
  and 
 $$\int_\gamma \alpha_{I,J} d x_J$$ 
 is the sum of the integrations of the smooth differential forms $\alpha_{I,J} d x_J|_{N_{\tau,\R}}$  $(\sigma_P \subset \tau )$ on semi-algebraic sets in the sense of \cite{HKT15}.

 This is well-defined by the boundary condition \cite[Proposition 5.17]{M20-2}.
\end{dfn}

\begin{prp}[{Jell-Shaw-Smacka \cite[Theorem 1]{JSS17}}]
 Let $X \subset T_{\Sigma}$ be a complex projective algebraic subvariety.
 There is a natural isomorphism 
 $$H_{\Trop}^{p,q}(\Trop(X),\R)
 \cong H_{\Trop,\Dol}^{p,q}(\Trop(X)).$$
 Moreover, this is given by integrations 
 (\cite[Corollary 5.21]{M20-2}).
\end{prp}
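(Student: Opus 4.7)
The plan is to prove this by the standard sheaf-theoretic de Rham argument: exhibit the complex of superforms as a fine resolution of the constructible sheaf whose sheaf cohomology computes tropical cohomology. Since the statement is attributed to Jell--Shaw--Smacka, I would follow their strategy, adapted to our slightly different sign convention on $d''$ (which as noted in \cref{subsection superforms and tropical Dolbeault cohomology} does not affect cohomology).

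First, I would sheafify superforms to obtain a sheaf $\underline{\mathscr{A}}^{p,q}_{\Trop}$ on $\Trop(X)$ by the rule $U \mapsto \mathscr{A}^{p,q}_{\Trop}(U)$. The boundary condition in \cref{def:forms} makes this a sheaf on the partial compactification $\bigsqcup_\sigma N_{\sigma,\R}$. Next, I would introduce the sheaf $\underline{F}^p$ on $\Trop(X)$ which to an open set $U$ assigns the subspace of $\bigoplus_{P\cap U \neq \emptyset} F^p(P)$ satisfying the obvious compatibility with the maps $i_{P_2 \subset P_1}$ of \cref{remdefofF_Pmap}; by the construction of tropical cohomology from local data on cones, one has $H^q(\Trop(X), \underline{F}^p) \cong H^{p,q}_{\Trop}(\Trop(X),\R)$.

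The core step is a tropical Poincar\'e lemma: for every point $x \in \Trop(X)$ and every $q \geq 1$, the stalk cohomology of $(\underline{\mathscr{A}}^{p,\bullet}_{\Trop,x}, d'')$ vanishes, and in degree zero equals $\underline{F}^p_x$. Away from the boundary strata this reduces to the Poincar\'e lemma for $d''$ on open subsets of $\R^n$, which follows from the usual smooth Poincar\'e lemma applied factor-by-factor in the decomposition $\mathscr{A}^{p,q}_{\Trop}(U) = \mathscr{A}^p(U) \otimes_{C^\infty(U)} \mathscr{A}^q(U)$. At a point $x$ lying in $\Trop(O(\sigma))$, one works in a small neighborhood which is a product of a tropical neighborhood in $N_{\sigma,\R}$ and a cone-like set in the transverse directions; here one uses the boundary condition to reduce to a contracting homotopy along the cones meeting $x$, combined with a partition-of-unity argument. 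The identification of the degree-zero cohomology with $\underline{F}^p_x$ is the same computation used to prove that $d''$-closed $(p,0)$-superforms near $x$ are exactly the locally constant sections of $\underline{F}^p$.

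Once the Poincar\'e lemma is established, the argument concludes quickly: each $\underline{\mathscr{A}}^{p,q}_{\Trop}$ is a fine sheaf (smooth partitions of unity on $\R^n$ restrict to superform partitions of unity compatible with the boundary condition), so $(\underline{\mathscr{A}}^{p,\bullet}_{\Trop}, d'')$ is a fine resolution of $\underline{F}^p$. Hence
\[
H^{p,q}_{\Trop,\Dol}(\Trop(X)) = H^q(\mathscr{A}^{p,\bullet}_{\Trop}(\Trop(X)), d'') \cong H^q(\Trop(X), \underline{F}^p) \cong H^{p,q}_{\Trop}(\Trop(X),\R).
\]
Passing to the direct limit over closed immersions $\varphi \colon X \to T_{\Sigma'}$ yields the parenthetical statement for $X$ itself. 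The main obstacle is the Poincar\'e lemma at points of the boundary strata: the boundary condition interlocks the factors $\alpha_\tau$ for $\tau \preceq \sigma$, so one cannot simply invoke the smooth Poincar\'e lemma on each stratum independently; a careful homotopy respecting the cone structure and the projections $\pi_{\sigma,\tau}$ is required.
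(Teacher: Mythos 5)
The paper does not prove this proposition: it is cited verbatim from Jell--Shaw--Smacka \cite[Theorem 1]{JSS17}, so there is no proof in the paper to compare against. Your sketch is in fact the standard strategy of \cite{JSS17} (sheafify superforms, show the complex $(\underline{\mathscr{A}}^{p,\bullet}_{\Trop},d'')$ is a fine resolution of the constructible sheaf $\underline{F}^p$ on the polyhedral space, compare with singular-type tropical cohomology), and you have correctly identified the two places where the real work lies: (i) the $d''$-Poincar\'e lemma at boundary strata, where the boundary condition (\cref{def:forms}) couples the data on different torus orbits and a na\"ive stratum-by-stratum contraction fails; and (ii) the comparison $H^q(\Trop(X),\underline{F}^p)\cong H^{p,q}_{\Trop}(\Trop(X),\R)$ between sheaf cohomology of $\underline{F}^p$ and the simplicial-type tropical cohomology of \cref{deftrocoho}, which you assert but do not argue. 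Both of these are non-trivial and are where the substance of the Jell--Shaw--Smacka proof resides; as a blind reconstruction your outline is faithful to the known proof, but it should not be presented as a proof without filling in these two steps. One small correction: the present paper's $d''$ differs by a sign from that of \cite{JSS17}, which (as you note) is harmless at the level of cohomology but does mean the integration pairing realizing the isomorphism uses a different normalization; the paper deliberately adopts this convention so that $d''$ is compatible with the coboundary of tropical cochains under integration on semi-algebraic chains (see the subsequent Lemma citing \cite[Corollary 5.21]{M20-2}).
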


\begin{rem}\label{remark weight tropicalization of complex algebraic varieties}
Algebraic subvarieties give  tropical homology classes as follows.
Let $X \subset T_{\Sigma}$ be a closed proper subvariety,
and $Y \subset X$ be a closed subvariety of dimension $p$.
Then, by \cite[Theorem 3.4.14]{MS15} and \cite[Proposition 4.3]{MZ13},
there is a fan structure $\Lambda$ of $\Trop(Y)$
and
the \emph{weight} $m_P \in \Z_{\geq 0}$ for each $P \in \Lambda$ of dimension $p$ satisfying the \emph{balanced condition},
see \cite[Definition 3.4.3]{MS15}. 
For each $P \subset \Lambda$, 
we fix an orientation of $\relint(P)$.
The subvariety $Y$ determines an element 
$$ \wtTrop(Y) := (-1)^{\frac{p(p-1)}{2} }\sum_{P \in \Lambda} m_P 1_{\wedge^q \Span_\Z P} \otimes [P] \in C_{p,p}^{\Trop}(\Trop(X),\Z)/ (\text{subdivisions}),$$
where $[P] \in C_{p}^{\sing}(P)$ is given by the fixed orientiation, see  \cref{definition wedge 1 span P} for the difinition of $1_{\wedge^q \Span_\Z P}$.
This does not depend on the choice of the fan structure $\Lambda$. (Here we do not care the difference between $[P]$ and homeomorphisms $\Delta^{\dim P} \cong P$.)
By the balanced condition, this defines an element of $H_{p,p}^{\Trop}(\Trop(X),\Z).$

When
$\Lambda \subset \Sigma$ 
and
$Y \subset T_{\Lambda}$ is a Sch\"{o}n subvariety, 
then 
the multiplicity $m_P$ coincides with the intersection number $Y \cdot \overline{O(P)}$ \cite[Theorem 6.7.7]{MS15}. 
\end{rem}

\begin{dfn}\label{definition wedge 1 span P}
  For a cone $P \subset \Trop(T_{\Sigma})$ of dimension $q$ and a fixed orientation of $P$,
  we put 
  $$1_{\wedge^q \Span_\Z P} \in \wedge^q \Span_\Z P $$
  the generator as a $\Z$-module
  such that 
  for any positive volume form $\omega $ of $\relint (P)$ and $x \in \relint(P)$,
  we have $\omega_x (1_{\wedge^q \Span_\Z P}) > 0$.
\end{dfn}

\section{Tropical $K$-groups and cycle class maps}\label{section algebraic}
In this section, 
we recall tropical analogs of Milnor $K$-groups, \textit{tropical $K$-groups}, 
give another expression of them over $\C$, 
and show that 
induced maps from tropical cohomology to singular cohomology
are compatible with  Liu's tropical and the usual cycle class maps.

We consider $\C$ as a trivially valued field.
Let $L/\C$ be a finitely generated extension of fields.
Let $p \geq 0$ be a non-negative integer.
We put 
$$  K_T^p(L/\C):=\lim_{\substack{\rightarrow \\ \varphi\colon \Spec L \rightarrow \G_m^r }} F^p(0, \Trop(\overline{\varphi(\Spec L)} )) , $$
where $\varphi \colon \Spec L \to \G_m^r$ runs all $\C$-morphisms to tori  of arbitrary dimensions
and the maps are the pull-back maps 
$$
F^p(0, \Trop(\overline{\varphi(\Spec L)} ) \to 
F^p(0, \Trop(\overline{\varphi'(\Spec L)} )
$$
under toric morphisms $\psi \colon \G_m^{r'} \to \G_m^r$ such that $\psi \cdot \varphi'= \varphi$.
We call it the $p$-th \emph{tropical $K$-group}.
When there is no confusion, we put $K_T^p(L):=K_T^p(L/\C)$.

\begin{prp}[{Mikami \cite[Corollary 5.4]{M20-1}}]\label{valuation K group}
We have
$$K_T^p(L) \cong  \wedge^p ( L^{\times})_\Q / J ,$$
where $J$  is the $\Q$-vector subspace generated by $f \in \wedge^p ( L^{\times})_\Q$
such that $\wedge^p v (f)=0 $ for $v \in \ZR(L/K)$,
where $\ZR(L/K)$ is the space of all non-archimedean valuation of $L$ trivial on $K$, 
and $\wedge^p v  $ is given by considering $v $ as a $\Z$-linear map from the multiplicative group $L^\times$.
\end{prp}

We denote by $\mathscr{K}_T^p$ the sheaf on the Zariski cite $X_{\Zar}$ defined by
$$\mathscr{K}_T^p(U):= \Ker (K_T^p(K(X))\xrightarrow{(\partial_\eta^x)_{ x \in U^{(1)}}} \oplus_{x\in U^{(1)}}K_T^{p-1}(k(x)),$$
where 
$\eta \in X$ is the generic point, 
$U^{(1)}$ is the set of points of an open subset $U \subset X$ of codimension $1$,
and $\partial_\eta^x$ is the residue map, see \cite[Section 5]{M20-1} for the definition of $\partial_\eta^x$.
We have 
$$H^p(X_{\Zar},\K_T^p)\cong \CH^p(X) \otimes \Q $$
\cite[Corollary 1.2]{M20-1}.
By \cite[Theorem 1.3]{M20-2},
we also have 
$$H_{\Trop}^{p,q}(X,\Q) \cong H^q (X_{\Zar}, \K_T^p).$$
The composition 
is 
  Liu's tropical cycle class map 
  $$\tropcyc \colon \CH^r (X) \to H_{\Trop}^{r,r} (X,\R)$$
(\cite[Definition 3.6]{Liu20}) 
  (here we identify tropical cohomology and tropical Dolbeault cohomology by the tropical de Rham theorem \cite[Theorem 1]{JSS17}).

\begin{prp}\label{Fp0 by singular cohomology}
 Let $X \subset \G_m^n=\Spec \mathbb{C}[M]$ be a smooth closed algebraic subvariety over $\mathbb{C}$.
 Then 
 the map 
 $$\wedge^p M \ni f_1 \wedge \dots \wedge f_p \mapsto -\frac{1}{2\pi i} d \log f_1 \wedge \dots \wedge -\frac{1}{2\pi i} d \log f_p \in H_{\sing}^{2p} (\G_m^n(\mathbb{C}),\Q)$$
 induces an isomorphism from 
 $F^p(0,\Trop(X)) $ to
 the $\Q$-vector subspace 
$$ \Q \langle -\frac{1}{2\pi i} d \log f_1  \wedge \dots \wedge -\frac{1}{2\pi i} d \log f_p \rangle_{f_i \in M} \subset \Omega^p_{\hol}(X(\mathbb{C})). $$
\end{prp}

\begin{cor}\label{tropcial K-group another expression}
  Let $L$  be a finitely generated extension of $\C$.
 Then we have 
 $$K^p_T(L/\C)
 \cong 
 \Q \langle -\frac{1}{2\pi i} \frac{d f_1}{f_1} \wedge \dots \wedge -\frac{1}{2\pi i} \frac{d f_p}{f_p} \rangle_{f_i \in L^{\times}} \subset \Omega^p_{\Kah}(L/\C). $$
\end{cor}

\begin{rem}
  \cref{tropcial K-group another expression} is a tropical analog of the following result of Milnor $K$-groups $K_M$.
  Bloch-Kato \cite[Theorem 2.1]{BK86} and Gabber proved independently that 
  for an integer $r \geq 0$ 
  and a field $L$ of characteristic $p >0$,
  we have 
 $$K^r_M(L) \otimes_\Z \F_p
 \cong 
 \F_p \langle \frac{d f_1}{f_1} \wedge \dots \wedge \frac{d f_r}{f_r} \rangle_{f_i \in L^{\times}} \subset \Omega^r_{\Kah}(L/\F_p), $$
 where $K^r_M$ is the $\Z$-coefficient Milnor $K$-group.
\end{rem}

\begin{proof}[Proof of \cref{Fp0 by singular cohomology}]
By \cite[Theorem 1.4]{LQ11},
we may and do assume that 
there exists a fan structure $\Lambda$ of $\Trop (X)$ such that the closure of $X$ in the toric variety $T_\Lambda$ is a Sch\"on compactification.

We put $\overline{X} \subset T_\Lambda$ the closure of $X$ 
and $D:= \overline{X} \setminus X$ a simple normal crossing divisor.
Let $D_i $ be its irreducible components.
Let $D(m) $ the disjoint union of intersections of $m$ irreducible components of $D$.
Let 
$$ f:= \sum_i f_{i,1} \wedge \dots \wedge f_{i,p} \in \wedge^p M$$
$( f_{i,j} \in M)$.
We put 
$$ d(-\frac{ \log f}{2\pi i}) := \sum_i -\frac{1}{2\pi i} d \log f_{i,1} \wedge \dots \wedge -\frac{1}{2\pi i} d \log f_{i,p}  $$
 Then 
 by the valuative description of tropical K-groups(\cref{valuation K group}),
 the image of $f$ in $F^p(0,\Trop(X))$ is $0$
if and only if 
the residue 
$$\res_p ( d(-\frac{ \log f}{2\pi i}) ) \in \Gamma (D(p), \underline{\mathbb{C}}_{D(p)})$$
is $0$, (i.e., for each cone $P \in \Lambda$  of dimension $p$, the restriction of $f$ to $\wedge^p \Span (P)$ is $0$ if and only if the $P$-component of the residue $\res_p ( d(-\frac{ \log f}{2\pi i}) )$ is $0$),
see \cite[3.4.1.3]{CZGT14} for the definition of residue.
By \cite[Lemma 3.4.3 (i)]{CZGT14},  this is equivalent to 
 $$ d(-\frac{ \log f}{2\pi i} ) =0 \in H^0 (\overline{X}, \Gr_p^W \Omega_{\overline{X}}^p(\log D)),$$
 see \cite[3.4.1.2]{CZGT14} for weight filtration.
 Since $\Gr_p^W \Omega_{\overline{X}}^m(\log D)=0$ for $m\leq p-1$,
this is equivalent to 
 $$ d(-\frac{ \log f}{2\pi i} ) =0 \in \mathbb{H}^p (\overline{X}, \Gr_p^W \Omega_{\overline{X}}^*(\log D)).$$
Since weight spectral sequence 
$$ E_{1,W}^{p,q}=\mathbb{H}^{p+q} (\overline{X}, \Gr_{-p}^W \Omega_{\overline{X}}^*(\log D)) \Rightarrow H_{\sing}^p(X(\mathbb{C}),\C)$$
degenerate at $E_2$ page (\cite[Proposition 3.3.21 (iv)]{CZGT14}), and $E_{1,W}^{-p-1,2p} =0$, 
this is equivalent to 
$$ d(-\frac{ \log f}{2\pi i} )=0\in \Gr_{2p}^W H_{\sing}^p(X(\mathbb{C}),\C).$$
Since $H_{\sing }^p (\G_m^n,\Q )$ is of pure weight $2p$, 
this is equivalent to  
$$ d(-\frac{\log f}{2\pi i} )=0 \in H_{\sing}^p(X(\mathbb{C}),\Q).$$
Since Hodge filtration degenerate at $E_1$ (\cite[Proposition 3.3.21 (v)]{CZGT14}), this is equivalent to 
$$ d(-\frac{\log f}{2\pi i} )=0 \in H^0(\overline{X}, \Omega_{\overline{X}}^p(\log D)).$$
\end{proof}

By Riemann extension theorem, we have the following.
\begin{prp}\label{morphism of dual tropicalization shaves}
  Let $X$ be a smooth complex algebraic variety.
  Then the isomorphism of \cref{tropcial K-group another expression} induces a morphism
  $$ d (-\frac{1}{2\pi i} \log (-)  ) \colon \K_T^p \to  \Omega_{\alg}^p$$
  of Zariski sheaves on $X_{\Zar}$.
\end{prp}
\begin{rem}\label{remark of compatibility with the usual and tropical cycle class maps}
  Let $X$ be a smooth complex projective variety.
 The morphism  
  $$ d (-\frac{1}{2\pi i} \log (-)  ) \colon \K_T^p \to  \Omega_{\alg}^p$$
    induces a morphism
  $$ H^q(X_{\Zar},\K_T^p) \to  H^{q}(X_{\Zar},\Omega_{\alg}^p).$$
  Since $H^q(X_{\Zar},\K_T^p) \cong H_{\Trop}^{p,q}(X)$ (\cite[Theorem 1.3]{M20-2}), 
  this induces a morphism
  $$ d (-\frac{1}{2\pi i}\log(-)  ) \colon H_{\Trop}^{p,q}(X) \to  H^{p,q}(X,\mathbb{C}).$$
  This map is compatible with usual and tropical cycle class maps,
  i.e.,
  \begin{align}
   d (-\frac{1}{2\pi i} \log (-) ) \circ \tropcyc =\cyc, 
   \label{compatibility with cycle class maps}
  \end{align}
  where 
  $\cyc \colon \CH^r (X) \to H_{\sing}^{r,r} (X,\Q)$
  is the usual cycle class map, and
  $$\tropcyc \colon \CH^r (X) \to H_{\Trop}^{r,r} (X,\R)$$
  is 
  Liu's tropical cycle class map 
(\cite[Definition 3.6]{Liu20}). 

  This easilly follows from the same assertion 
  but using $H^q(X_{\Zar},\K_M^p)$ instead of  $H^q(X_{\Zar},\K_T^p) \cong H^{p,q}_{\Trop}(X)$. 
  Here, the sheaf $\K_M^p$ is the sheaf of Milnor $K$-groups.
  As far as the author knows, the only proof of this assertion for $H^q(X_{\Zar},\K_M^p)$ in the literature is a proof by Esnault-Parajape \cite{EP} on a webpage.
  For readers convinience, we give their proof here.
\end{rem}

\begin{proof}[Proof of \cref{compatibility with cycle class maps}]
 Since the Chern character $$ \ch \colon K_0^* (X) \otimes \Q \to \CH^*(X) \otimes \Q$$ is an isomorphism (here $K_0^*(X)$ is the Grothendieck group of vector bundles on $X$) of rings,
 it suffices to prove 
 \begin{align}
 d (-\frac{\log}{2\pi i}  ) \circ \tropcyc \circ \ch (V) =\cyc \circ \ch (V) \label{eq of compatibility with cycle class maps chern character}
 \end{align}
 for any vector bundle $V$ on $X$.
 For a line bundle $L$ on $X$,
 by \cite[(2.2.5.2)]{Del71},
 \cref{eq of compatibility with cycle class maps chern character} holds.
 For a vector bundle $V$ on $X$, 
 by induction on the rank of $V$ and pull-back map under $\P(V) \to X$ (where $\P(V)$ is the projective bundle of $V$), 
 \cref{eq of compatibility with cycle class maps chern character} holds.
 (Note that the fact that  the tropical cycle class map is compatible with the intersection product of Chow groups and the product of $H^p(X_{\Zar},\K_T^p)$ follows from an assertion in \cite[Section 14]{Ros96}.)
\end{proof}

\section{Construction of $\Trop^*$}\label{section analytic}
Let $T_\Sigma$ be a smooth quasi-projective toric variety over $\mathbb{C}$
 corresponding to a fan $\Sigma$ in $N_\R$ and $X \subset T_\Sigma$ a smooth closed algebraic subvariety of dimension $d$.
In this section, we shall construct a map
$$\Trop^* \colon \mathscr{A}_{\Trop}^{p,q}(\Trop(X)) \to \mathscr{D}^{p,q}(X(\C)),$$
where $\mathscr{D}^{p,q}(X(\C))$
is the set of $(p,q)$-currents.
Note that by partition of unity, 
every superform $\omega \in \mathscr{A}_{\Trop}^{p,q}(\Trop(X))$ can be lift onto $\Trop(T_\Sigma)$.

\begin{dfn}
  Let  $\epsilon >0$. 
  We define a map 
  \begin{align*}
     &  - \epsilon \log \lvert \cdot \rvert^*  \colon 
     \sA_{\Trop}^{p,q}(\Trop(T_\Sigma)) \to \sA^{p,q}(X(\C))\\
     &  \omega := f(x)d'x_{i_1} \wedge \dots \wedge d'x_{i_p}
     \otimes d'' x_{j_1} \wedge \dots \wedge d''x_{j_q} \\
     &\mapsto 
     f(-\epsilon  \log \lvert z \rvert )
     d ( - \frac{\epsilon}{2} \log  \overline{z_{j_1}}  ) \wedge \dots \wedge 
     d ( - \frac{\epsilon}{2} \log  \overline{z_{j_q}}  ) \wedge \dots \wedge 
     d ( - \frac{1}{2\pi i} \log  z_{i_1}  ) \wedge \dots 
     \wedge d( - \frac{1}{2\pi i}\log  z_{i_p} ).
  \end{align*}
  Here, we denote an element  of $M$ by $z_i$ as a holomorphic function and by $x_i$ as a real-valued function on $\Trop(\G_m^n)=N_\R$,
  and $\overline{z_i}$ is the complex conjugate.
  By the boundary condition of $\omega \in \mathscr{A}_{\Trop}^{p,q}(\Trop(T_{\Sigma}))$ 
  (Definition \ref{def:forms}), 
  this map is well-defined, i.e., the image
  $ - \epsilon \log \lvert \cdot \rvert^*(\omega)$
  is a smooth form.
\end{dfn}

Note that we have 
$$-\epsilon \log \lvert \cdot \rvert^* (d'' \omega )
=\overline{\partial}(-\epsilon \log \lvert \cdot \rvert^* (\omega ))$$

\begin{thm}\label{limit of -epsilon log omega}
 Let $\omega \in \sA_{\Trop}^{p,q}(\Trop(T_{\Sigma}))$ be a superform.
 Then the following holds.
 \begin{enumerate}
   \item The weak limit 
 $$\lim_{\epsilon \to 0}  
       - \epsilon \log \lvert \cdot \rvert^*  (\omega) $$
of $(p,q)$-currents on $X(\C)$ is well-defined,
 i.e., for $\alpha \in \sA_{\mathrm{c}}^{d-p,d-q}(X(\C))$,
 $$\lim_{\epsilon \to 0}  
     \int_{X(\C)}  - \epsilon \log \lvert \cdot \rvert^*  (\omega) \wedge \alpha  $$
    converges to a complex number, and it is continuous in $C^{\infty}$-topology.
  \item The  current
 $$\lim_{\epsilon \to 0}  
       - \epsilon \log \lvert \cdot \rvert^* (\omega) $$
  depends  only on the class of $\omega$ in $\sA_{\Trop}^{p,q}(\Trop (X))$.
 \end{enumerate}
\end{thm}
We put 
 $$ \Trop^* (\omega) := \lim_{\epsilon \to 0}   - \epsilon \log \lvert \cdot \rvert^* (\omega)  \in \D^{p,q}(X(\C)).$$

\begin{proof}
  (1)
  By the existence of Sch\"{o}n very affine open subvarieties \cite[Theorem 1.4]{LQ11}(see \cref{existence of Schon variety} in this paper),
  we may and do assume the followings.
  \begin{itemize}
    \item There is a subfan $\Lambda$ of $\Sigma$ such that $X \subset T_{\Lambda}$ and it is Sch\"{o}n compact.
    \item $\supp(\alpha) \subset T_{\sigma} $ for some $\sigma \in \Lambda$. In the following, we identify $T_{\sigma} \cong \A^s \times \G_m^{n-s}$ $(s := \dim \sigma) $.
    We denote the coordinate of $\A^s \times \G_m^{n-s} (\C)$ by $z_1, \dots z_n$ and that of $\Trop(\A^s \times \G_m^{n-s})$ by $x_1,\dots,x_n$.
    There is a  holomorphic coordinate $w_1,\dots,w_d$ of  some open neighborhood $W \subset X(\C)$ of $\supp(\alpha)$ such that $w_i =z_i$ for $i \leq s$.
    We consider $W \subset \A^d$ by this coordinate.
  \end{itemize}
  
  The integration
  $\int_{X(\C)}  - \epsilon \log \lvert \cdot \rvert^* (\omega) \wedge \alpha  $
  is a sum of the integrations of the form
  \begin{align*}
  \int_{X(\C)} g(w)
  f(-\epsilon \log \lvert z_1\rvert,\dots, -\epsilon \log \lvert z_n \rvert)
  \epsilon^u d \Delta w_1 \wedge d \Delta \overline{w_1} \wedge \dots \wedge d \Delta w_s \wedge d \Delta \overline{w_s} \\
  \wedge d w_{s+1} \wedge d \overline{w_{s+1}} \wedge \dots \wedge  d w_{d} \wedge d \overline{w_d},
  \end{align*}
  where 
  \begin{itemize}
    \item $u\in \Z_{\geq 0}$,
    \item $g(w)$ is a smooth function with compact support $\supp(g) \subset \supp (\alpha)$,
    \item $\Delta (w_i)$ is $w_i$ or $- \log (w_i)$,
    \item $\Delta (\overline{w_i})$ is $\overline{w_i}$ or $- \epsilon \log (\overline{w_i})$,
    \item $f(x_1,\dots,x_n) \in \mathscr{A}_{\Trop}^{0,0}(U\cap \Trop(\A^s \times \G_m^{n-s}))$ is a compactly-supported smooth function 
    such that $f$ is $0$ near $x_i=\infty$ for $1 \leq i \leq s$ satisfying
    $\Delta (w_i)=- \log (w_i)$ or $\Delta (\overline{w_i}) = - \epsilon \log (\overline{w_i})$.
    (This condition comes from the boundary condition of $\omega \in \mathscr{A}_{\Trop}^{p,q}(U)$ (\cref{def:forms}).)
  \end{itemize}

  By using polar coordinates $w_i(r_i,\theta_i) = r_i e^{i \theta_i} \ (1 \leq i \leq s)$,
  the integration
  $\int_{X(\C)}  - \epsilon \log \lvert \cdot \rvert^* (\omega) \wedge \alpha  $
  is a sum of the integration of the form
  \begin{align*}
  & \int_{(r_1,\dots,r_s,\theta_1,\dots,\theta_s,w_{s+1},\dots,w_{d}) 
  \in \R_{>0}^s \times [0,2\pi]^s \times (\C^{\times})^{d-s}} \epsilon^{u'} e^{i\theta} r^J g'(w_1,\dots,w_d)
  f(-\epsilon \log \lvert z_1\rvert,\dots, -\epsilon \log \lvert z_n \rvert) \\
  & d \Delta r_1 \wedge  \dots \wedge d \Delta r_s 
  \wedge d \theta_1 \wedge \dots \wedge d \theta_s
  \wedge d w_{s+1} \wedge d \overline{w_{s+1}} \wedge \dots \wedge  d w_{d} \wedge d \overline{w_d},
  \end{align*}
  where 
  \begin{itemize}
    \item $r^J:= \prod_{j \in J}r_j$ for some $J \subset \{1,\dots,d\}$, 
    \item $\theta$ is a linear sum of $\theta_1,\dots,\theta_s$,
    \item $u'\in \Z_{\geq 0}$,
    \item $g'(w_1,\dots,w_d)$ is a smooth function with compact support $\supp(g') \subset \supp (\alpha)$,
    \item $\Delta (r_i)$ is $r_i$ or $- \epsilon \log r_i$.
  \end{itemize}
  We may  assume $\Delta(r_i) =-\epsilon \log r_i$ for $1 \leq i \leq r$ and $\Delta (r_i)= r_i$ otherwise. We may assume $J \subset \{r+1, \dots,d\}$.
  Remind that $x_i= -\epsilon \log r_i \ (1 \leq i \leq r)$ by the map 
  $$-\epsilon \log \lvert \cdot \rvert  \colon T_{\Sigma}(\C) \to \Trop(T_{\Sigma}).$$
  We also denote by $W$ the image of $W$ under the map 
  $$(\C^{\times})^n \ni (z_i=r_i e^{i \theta_i })_i \to (-\epsilon \log r_i, r_j, \theta_k, z_l) \in \R^r \times \R_{>0}^{s-r} \times [0,2\pi]^s \times (\C^{\times})^{d-s}.$$
  Then the above integration is
  \begin{align*}
  & \int_{(x_1,\dots,x_r,r_{r+1},\dots,r_s,\theta_1,\dots,\theta_s,w_{s+1},\dots,w_{d}) \in \R^r \times \R_{>0}^{s-r} \times [0,2\pi]^s \times (\C^{\times})^{d-s} }
  \epsilon^{u'} e^{i \theta} r^J \\
  & g'(w_1(e^{\frac{-x_1}{\epsilon}},\theta_1), \dots,w_r(e^{\frac{-x_r}{\epsilon}},\theta_r), w_{r+1}(r_{r+1},\theta_{r+1}),\dots,w_{s}(r_{r},\theta_{s}),w_{s+1},\dots,w_d  ) \\
  &
  f(x_1,\dots,x_r,-\epsilon \log \lvert z_{r+1}\rvert,\dots, -\epsilon \log \lvert z_{n} \rvert) \\
  & d x_1 \wedge  \dots \wedge d x_r 
  \wedge d r_{r+1} \wedge  \dots \wedge d r_s 
  \wedge d \theta_1 \wedge \dots \wedge d \theta_s
  \wedge d w_{s+1} \wedge d \overline{w_{s+1}} \wedge \dots \wedge  d w_{d} \wedge d \overline{w_d}. 
  \end{align*}
  By the condition of $f$,
  the set $\pr_{1}(\supp(f))  \subset \R^r$
  is compact, 
  where 
  $$ \pr_1 \colon \R^r \times \R_{>0}^{s-r} \times [0,2\pi]^s \times (\C^{\times})^{d-s} \to \R^r$$
  is the projection to the first $r$ coordinates.
  Since  
  $$ \pr_2 (\supp (g) ) \subset
  \R_{>0}^{s-r} \times [0,2\pi]^s \times (\C^{\times})^{d-s}$$
  is bounded, 
  where
  $$ \pr_2 \colon \R^r \times \R_{>0}^{s-r} \times [0,2\pi]^s \times (\C^{\times})^{d-s} \to 
  \R_{>0}^{s-r} \times [0,2\pi]^s \times (\C^{\times})^{d-s}
  $$
  is the projection, 
  there exists a integrable function $h>0$ 
  on
  $$ 
  \R^r \times \R_{>0}^{s-r} \times [0,2\pi]^s \times (\C^{\times})^{d-s}$$
  such that 
  $\lvert r^J g' f \rvert \leq h$ for any small $\epsilon >0$.
  Hence by a usual limit argument of Lebesgue integration, 
  the limit 
  \begin{align*}
  &\lim_{\epsilon \to 0} \int_{(x_1,\dots,x_r,r_{r+1},\dots,r_s,\theta_1,\dots,\theta_s,w_{s+1},\dots,w_{d}) \in \R^r \times \R_{>0}^{s-r} \times [0,2\pi]^s \times (\C^{\times})^{d-s}}
  \epsilon^{u'} e^{i \theta} r^J \\
  & g'(w_1(e^{\frac{-x_1}{\epsilon}},\theta_1), \dots,w_r(e^{\frac{-x_r}{\epsilon}},\theta_r), w_{r+1}(r_{r+1},\theta_{r+1}),\dots,w_{s}(r_{r},\theta_{s}),w_{s+1},\dots,w_d  ) \\
  &
  f(x_1,\dots,x_r,-\epsilon \log \lvert z_{r+1}\rvert,\dots, -\epsilon \log \lvert z_{n} \rvert) \\
  & d x_1 \wedge  \dots \wedge d x_r 
  \wedge d r_{r+1} \wedge  \dots \wedge d r_s 
  \wedge d \theta_1 \wedge \dots \wedge d \theta_s
  \wedge d w_{s+1} \wedge d \overline{w_{s+1}} \wedge \dots \wedge  d w_{d} \wedge d \overline{w_d}
  \end{align*}
  exists and absolutely converges to a complex number
  \begin{align*}
  & \int_{(x_1,\dots,x_r,r_{r+1},\dots,r_s,\theta_1,\dots,\theta_s,w_{s+1},\dots,w_{d}) \in 
(-\infty,\infty)^r \times \R_{>0}^{s-r} \times [0,2\pi]^s \times (\C^{\times})^{d-s}}
  0^{u'} e^{i \theta} r^J \\
  &g'(0 \dots,0, w_{r+1}(r_{r+1},\theta_{r+1}),\dots,w_{s}(r_{r},\theta_{s}),w_{s+1},\dots,w_d  ) \\
  &
  f(x_1,\dots,x_r,0,\dots,0) \\
  & d x_1 \wedge  \dots \wedge d x_r 
  \wedge d r_{r+1} \wedge  \dots \wedge d r_s 
  \wedge d \theta_1 \wedge \dots \wedge d \theta_s
  \wedge d w_{s+1} \wedge d \overline{w_{s+1}} \wedge \dots \wedge  d w_{d} \wedge d \overline{w_d}.
  \end{align*}

By this expression, the linear function $\Trop^*(w)$ on $ \mathscr{A}^{p,q}(X(\C))$  is continuous in $C^{\infty}$-topology.

(2)
Let 
$$\omega := \sum_{I=(I_1,I_2)} f_I(x) d'x_{I_1}\otimes d'' x_{I_2  }$$
whose restriction to $\Trop (X)$ is $0$.
We shall show $\Trop^* (\omega)= 0$.
We may assume that $\supp (w) \subset T_{\sigma}$
for some $\sigma \in \Lambda$. In the following, we identify $T_{\sigma} \cong \A^s \times \G_m^{n-s}$ $(s := \dim \sigma) $.
We denote the coordinate of $\A^s \times \G_m^{n-s} (\C)$ by $z_1, \dots z_n$ and that of $\Trop(\A^s \times \G_m^{n-s})$ by $x_1,\dots,x_n$.
There is a  holomorphic coordinate $w_1,\dots,w_d$ of  some open neighborhood $W \subset X(\C)$ of $\supp(\alpha)$ such that $w_i =z_i$ for $i \leq s$.

By the proof of (1),
the current $\Trop^*(f_I (x) d' x_{I_1} \wedge d'' x_{I_2})$  is non-zero 
only if 
$I_2 \subset I_1$ and $I_2 \subset \{1,\dots,s\}$.
For each $J \subset \{1,\dots,s\}$,
the restriction of 
$$\omega_J := \sum_{I=(I_1,J), J  \subset I_1 } f_I(x) d'x_{I_1} \otimes d'' x_{J}$$
to $\Trop (X)$ is $0$.
It suffices to show that 
$\Trop^*(\omega_J)=0$.

We may and do assume that 
$J= \{1,\dots,r\}$ for some $r \leq s$.
Then by \cref{Fp0 by singular cohomology},  for any $(a_1,\dots,a_r) \in [0,\infty)^r $,
the differential form 
$$\sum_{I=(I_1,\{1,\dots,r\}), \{1,\dots,r\} \subset I_1} 
f_I(a_1,\dots,a_r,0,\dots,0) 
d (- \frac{1}{2\pi i}\log z_{I_1 - \{1,\dots,r\}})$$
is $0$ on $X \cap O(\sigma_r)(\C)$.
Hence for $\alpha = g d w_K \wedge d \overline{w}_L\in \mathscr{A}^{d-p,d-q}(X(\C))$,
in the same way as the proof of (1),
we have
\begin{align*}
&\Trop^*(\omega_J)(\alpha) \\
=& \lim_{\epsilon \to 0} \int_{(x_i,\theta_i,w_k) \in
(-\infty,\infty)^r  \times [0,2\pi]^r \times (\C^{\times})^{d-r}}
 g(w_1 (e^{\frac{-x_1}{\epsilon}},\theta_1),w_r(e^{\frac{-x_r}{\epsilon}},\theta_r) ,w_{r+1},\dots,w_n) \\
&\sum_{I=(I_1,\{1,\dots,r\}), \{1,\dots,r\} \subset I_1} f_I (x_1,\dots,x_r,-\epsilon \lvert z_{r+1}\rvert ,\dots,-\epsilon \lvert z_{n}\rvert ) 
d x_i \wedge d (-\frac{1}{2 \pi}\theta_i) \\
&\wedge 
d (- \frac{1}{2\pi i}\log z_{I_1 - \{1,\dots,r\}})
\wedge d w_K \wedge d \overline{w}_L  \\
=& \int_{(x_i,\theta_i,w_k) \in
(-\infty,\infty)^r  \times [0,2\pi]^r \times (\C^{\times})^{d-r}}
   g(0,\dots,0,w_{r+1},\dots,w_n) \\
&\sum_{I=(I_1,\{1,\dots,r\}), \{1,\dots,r\} \subset I_1} f_I (x_1,\dots,x_r,0,\dots,0) 
d x_i \wedge d (-\frac{1}{2\pi} \theta_i) \wedge 
d (- \frac{1}{2\pi i}\log z_{I_1 - \{1,\dots,r\}})  
\wedge d w_K \wedge d \overline{w}_L \\
=&0.
\end{align*}
\end{proof}

In the proof of \cref{limit of -epsilon log omega}, we have shown the following.
\begin{prp}\label{image of Trop* explicit}
  We assume that there is a subfan $\Lambda \subset \Sigma$ 
  such that $X \subset T_{\Lambda}$ is a Sch\"{o}n compactification.
  Then for $p \geq q$,
  the image of 
  $$\Trop^* \colon \mathscr{A}_{\Trop}^{p,q} (\Trop(X)) \to \mathscr{D}^{p,q}(X(\C))$$
  is the $\R$-vector space 
  spaned  by elements 
  $$\bigg(\alpha \mapsto \int_{X(\C) \cap O(\sigma)(\C)} 
  d(-\frac{1}{2 \pi i} \log z_1) \wedge  \dots \wedge
  d(-\frac{1}{2 \pi i} \log z_{p-q}) \wedge  \alpha \bigg)$$
  $(\sigma \in \Lambda$ of dimension $q$ and $z_i \in M \cap \sigma^{\perp} )$, 
  and
  for $p \leq q-1$, the map $\Trop^*$ is the $0$-map.
\end{prp}
\begin{rem}
  For $\omega \in \mathscr{A}_{\Trop}^{p,q-1}(\Trop(X))$ and $\alpha \in \mathscr{A}_{\mathrm{c}}^{d-p,d-q}(X(\C))$,
we have 
\begin{align*}
& \overline{\partial}(\Trop^*(\omega))(\alpha)\\
= (-1)^{p+q} &\Trop^*(\omega)(\overline{\partial} \alpha)\\
=&\lim_{\epsilon \to 0}
 (-1)^{p+q} \int_{X(\C)} -\epsilon \log \lvert \cdot \rvert^* (\omega) \wedge \overline{\partial}\alpha \\
=&\lim_{\epsilon \to 0} \int_{X(\C)} \overline{\partial} ( -\epsilon \log \lvert \cdot \rvert^* (\omega)) \wedge \alpha \\
=&\lim_{\epsilon \to 0} \int_{X(\C)} -\epsilon \log \lvert \cdot \rvert^* (d'' \omega) \wedge \alpha \\
=& (\Trop^*(d''\omega))(\alpha).
\end{align*}

In the rest of this section, we assume that $X$ is projective.
The map
$$\Trop^* \colon \mathscr{A}_{\Trop}^{p,*}(\Trop(X)) \to \D^{p,*}(X(\C))$$
induces a map of the tropical and the usual Dolbeault cohomology groups
$$\Trop^* \colon H_{\Trop,\Dol}^{p,q}(\Trop(X)) \to H^{p,q}(X(\C)).$$
For a smooth projective variety $X$, 
by the family of closed immersions of $X$ to toric varieties,
we get  a map
$$\Trop^* \colon H_{\Trop,\Dol}^{p,q}(X)\to H^{p,q}(X(\C)).$$
\end{rem}

\begin{rem}\label{maps of cohomology are the same}
  We have 
$$ d(-\frac{1}{2\pi i} \log (-) )= \Trop^*  \colon
H_{\Trop,\Dol}^{p,q}(X) \to H^{p,q}(X(\C)).$$
  This can be seen as follows.

\begin{itemize}
  \item There is a sheaf $\mathscr{F}^p \otimes_\Q \R$ (see \cite[Section 5.2]{M20-2})
on  the  Berkovich analytic space $X^{\Ber}$ over $\C$ with the trivial valuation.
  The tropical Dolbeault cohomology $H_{\Trop,\Dol}^{p,q}(X)$ is its sheaf cohomology, and 
  the complex of the sheaves $\sA_{\Trop}^{p,*}$ of $(p,*)$-superforms (see \cite[Section 3]{Jel16} for definition) is a fine (and hence acyclic) resolution of $\mathscr{F}^p \otimes_\Q \R$.
  \item 
  Let $\Phi \colon X^{\Ber} \to X_{\Zar}$ is the map taking supports of valuations.
  Then the complex $\Phi_* \sA_{\Trop}^{p,*}$ is an acyclic resolution of $\Phi_* \mathscr{F}^p \otimes_\Q \R \cong \K_T^p \otimes_\Q \R$, which follows from \cite[Remark 6.3]{M20-2}.
  (Strictly speaking, \cite[Remark 6.3]{M20-2} says that $\Phi_* \mathscr{C}^{p,*}$ is exact, where $\mathscr{C}^{p,*}$ is the complex of sheaves of tropical cochains on $X^{\Ber}$. By \cite[Theorem 1]{JSS17}, this is equivalent to exactness of $\Phi_* \sA_{\Trop}^{p,*}$.)
  \item
  Our maps 
  $$\Trop^* \colon \mathscr{A}_{\Trop}^{p,q}(\Trop(\varphi(V))) \to \D^{p,q}(V)$$ 
  for open subvarieties $V \subset X$ and closed immersions $\varphi \colon X \to T_{\Sigma'}$ to toric varieties
  give a map 
  $$\Phi_* \mathscr{A}_{\Trop}^{p,*} \to \Psi_* \D^{p,*}$$
  of complex of sheaves on the Zarisiki topology $X_{\Zar}$, 
  where 
  the map $\Psi \colon X(\C) \to X$ is the natural one.
 
  The map 
$$ d(-\frac{1}{2\pi i} \log (-) ) \colon
H_{\Trop,\Dol}^{p,q}(X) \to H^{p,q}(X(\C))$$
is given by this morphism of complex of sheaves.
\end{itemize}

\end{rem}

\section{Semi-algebraic geometry and logarithmic integrals}\label{section admissible and logarithmic integrals}
In this section, we recall admissible semi-algebraic subsets, introduced by Hanamura, Kimura, and Terasoma, and logarithmic integral on them. See \cite{HKT15} and \cite{HKT20} for details.
These will be used in \cref{Section Real semi-algebraic construction}.

Let $T_{\Sigma}$ be a smooth projective toric variety over $\C$ corresponding to a fan $\Sigma$ in $\R^n$,
and $X \subset T_{\Sigma}$ a closed algebraic subvareity.

A semi-algebraic subset of  a $d$-dimensional complex algebraic variety $Y$ means
a semi-algebraic subset of $2d$-dimensional real algebraic variety $Y(\C)$.
We put $C_r^{\semialg}(Y(\C),\Q)$ the $\Q$-vector space of singular semi-algebraic chains, i.e., singular chains $\Delta^r \to Y(\C)$ which are  semi-algebraic.
When there are no confusions, we identify singular semi-algebraic chains with its image as formal sums of compact  oriented semi-algebraic subsets.
We sometimes identify them with their subdivisions.

\begin{dfn}\label{definition of admissible semi-algebraic subsets}
  A closed semi-algebraic set $S \subset T_{\Sigma}(\C)$ of pure dimension $r$
  is said to be admissible
  if and only if 
  we have 
  $$\dim(S  \cap   \overline{O(\sigma)(\C)}) \leq \dim S -2 \dim \sigma, $$ 
  for any $\sigma \in \Sigma$.
  We put 
  $AC_r(X(\C)):= AC_r(X(\C),\Q) $
  the subset of singular semi-algebraic chain $C_r^{\semialg} (X(\C),\Q)$ consisting of $\gamma$ such that the images of $\gamma $ and $\partial \gamma$ are admissible.
\end{dfn}

\begin{rem}\label{blow-up of admissible is also admissible}
  When $S\subset T_{\Sigma}$ is admissible,
  for any toric blow-up $T_{\Sigma'} \to T_{\Sigma}$ with smooth $T_{\Sigma'}$,
  the closure $\overline{S \cap \G_m^n(\C)} $ in $T_{\Sigma'}$ is also admissible.
\end{rem}

The following moving lemma is a  generalization of \cite[Theorem 2.8]{HKT20}, and it can be proved in the completely same way.
Remind that when there exists a subfan $\Lambda \subset \Sigma$ such that $X \subset T_{\Lambda}$ is Sch\"{o}n compact, the intersection $ X \cap O(\sigma)$ is of pure complex dimension $\dim \sigma$ for any $\sigma \in \Lambda$.
\begin{prp}[{Hanamura-Kimura-Terasoma, \cite[Theorem 2.8]{HKT20}}]\label{admissible moving lemma}
  When there exists a subfan $\Lambda \subset \Sigma$ such that  $X \subset T_{\Lambda}$ is Sch\"{o}n  compact,
  the natural map 
  $$ AC_r(X(\C),\Q) \to C_r^{\semialg} (X(\C),\Q)$$
  is a quasi-isomorphism, i.e., induces an isomorphism of homology groups.
\end{prp}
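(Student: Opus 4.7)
The plan is to follow Hanamura--Kimura--Terasoma's original moving-lemma argument \cite[Theorem 2.8]{HKT20} for the ambient toric variety, using the tropical compactness hypothesis to transport their key dimension count from $T_{\Lambda}$ to $X$. By Remark~\ref{rem;trop;cpt;property}(3), tropical compactness of $X \subset T_{\Lambda}$ ensures that $\dim_{\C}(X \cap O(\sigma)) = \dim_{\C} X - \dim \sigma$ for every $\sigma \in \Lambda$, so each stratum of the induced stratification $X(\C) = \bigsqcup_{\sigma \in \Lambda} (X \cap O(\sigma))(\C)$ has the ``expected'' real codimension $2 \dim \sigma$ in $X(\C)$. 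This equidimensionality is precisely the ingredient that makes the admissibility inequality $\dim(S \cap \overline{O(\sigma)(\C)}) \leq \dim S - 2\dim \sigma$ attainable by a generic semi-algebraic perturbation.

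For surjectivity on homology, I would show that any cycle $\gamma \in C_r^{\semialg}(X(\C),\Q)$ is homologous to an admissible one through a semi-algebraic chain homotopy. First, fix a semi-algebraic triangulation of $X(\C)$ compatible with the closed stratification $\{\overline{X \cap O(\sigma)(\C)}\}_{\sigma \in \Lambda}$ (which exists by the standard semi-algebraic triangulation theorem and is used throughout \cite{HKT15}, \cite{HKT20}). After sufficient subdivision and simplicial approximation, we may assume $\gamma$ is a simplicial chain. Then perturb $\gamma$ by a small semi-algebraic isotopy of $X(\C)$ to place each simplex in general position with respect to every $\overline{X \cap O(\sigma)(\C)}$. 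Transversality then gives real $\dim(\text{simplex} \cap \overline{X \cap O(\sigma)(\C)}) = \dim(\text{simplex}) - 2\dim \sigma$, which yields admissibility of the resulting chain.

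For injectivity, the identical perturbation procedure is applied \emph{relative to the boundary}: given $\gamma \in AC_r(X(\C),\Q)$ with $\gamma = \partial \delta$ for some $\delta \in C_{r+1}^{\semialg}(X(\C),\Q)$, perturb $\delta$ keeping $\partial \delta = \gamma$ fixed to obtain an admissible $\delta' \in AC_{r+1}(X(\C),\Q)$ with $\partial \delta' = \gamma$. The perturbation is supported away from a semi-algebraic collar of $\partial \delta$, so $\gamma$'s preexisting admissibility is preserved. This gives $[\gamma] = 0$ in $H_r(AC_*)$.

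The main obstacle will be arranging \emph{simultaneous} general position with respect to all strata $\overline{X \cap O(\sigma)(\C)}$, since a perturbation chosen to transversalize one $\sigma$ could undo transversality at a neighboring $\tau$. This is handled by induction on $\dim \sigma$, starting from the deepest (highest-codimension) stratum and proceeding outward, with the support of each successive perturbation shrunk to lie inside the open locus where previously-arranged transversalities are stable; tropical compactness ensures at each step that the target codimension is achievable in $X$, not merely in the ambient $T_{\Lambda}$. Once simultaneous transversality is achieved, the resulting chain is admissible, and the remainder of the argument is essentially verbatim that of \cite[Theorem 2.8]{HKT20}, whose only extra ingredient beyond toric-variety stratification---a uniform dimension count along orbits---is precisely supplied on $X$ by the tropical compactness hypothesis.
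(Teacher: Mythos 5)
Your proposal is correct and matches the paper's intended argument, which simply defers to \cite[Theorem 2.8]{HKT20} ("it can be proved in the completely same way") and records the one new input: tropical compactness of $X \subset T_{\Lambda}$ forces $X \cap O(\sigma)$ to have pure complex codimension $\dim \sigma$ in $X$, so the stratification of $X(\C)$ has the same expected codimensions as that of the ambient toric variety. You correctly identify this codimension count as the only point where the hypothesis enters, and the perturbation/transversality moving argument you sketch for both surjectivity and relative-boundary injectivity is exactly Hanamura--Kimura--Terasoma's scheme transported to $X$.
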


\begin{prp}[{Hanamura-Kimura-Terasoma, \cite[Theorem 4.4]{HKT20}}]\label{integration on admissible set is well-defined}
  Let $S \subset (\C^{\times})^n$ be a closed semi-algebraic set of pure dimension $r$
  such that the closure $\overline{S} \subset T_{\Sigma}(\C)$ is admissible.
  Then for any $f_i \in M$,
  the integration 
  $$\int_S d \log f_1 \wedge d \log f_2 \wedge \dots \wedge d \log f_r$$ absolutely converges.
\end{prp}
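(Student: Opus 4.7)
The plan is to reduce, using compactness of $\overline{S} \subset T_{\Sigma}(\C)$ and a partition of unity, to a purely local convergence estimate near each point $p \in \overline{S} \cap O(\sigma)$ for cones $\sigma \in \Sigma$ of positive dimension, and then to deduce that local estimate from polar coordinates combined with the dimensional bound coming from admissibility. If $p \in \G_m^n(\C)$, the form $d\log f_1 \wedge \dots \wedge d\log f_r$ is smooth, so the local contribution is trivially finite; the content is concentrated near the toric boundary.

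First I would work in an affine toric chart of $T_\Sigma$ containing $p$, with holomorphic coordinates $(u_1,\dots,u_n)$ chosen so that the toric boundary divisors through $p$ are $\{u_j=0\}$ for $j=1,\dots,d$, where $d=\dim\sigma$, and $\overline{O(\sigma)}$ locally equals $\{u_1=\dots=u_d=0\}$. Each $f_i \in M$ becomes a Laurent monomial, so
\[
d\log f_i \;=\; \sum_{j=1}^{d} a_{ij}\,\frac{du_j}{u_j} \;+\; \eta_i, \qquad a_{ij}\in\Z,
\]
with $\eta_i$ smooth at $p$. Expanding the wedge product gives
\[
d\log f_1 \wedge \dots \wedge d\log f_r \;=\; \sum_{J \subseteq \{1,\dots,d\}} c_J\,\bigg(\bigwedge_{j\in J} \tfrac{du_j}{u_j}\bigg) \wedge \eta_J,
\]
where each $\eta_J$ is a smooth $(r-|J|)$-form and $c_J\in\Q$. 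It suffices to bound the $L^1$-norm of each term on $S\cap U$ for a small neighborhood $U$ of $p$.

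Second, I would pass to polar coordinates $u_j = \rho_j e^{i\theta_j}$ for $j\in J$; then $\bigl|\bigwedge_{j\in J}\frac{du_j}{u_j}\bigr|$ is dominated by a constant times $\prod_{j\in J}\rho_j^{-1}$ against a smooth angular density. The admissibility hypothesis, applied to the face $\tau_J\preceq\sigma$ spanned by the rays $\{u_j=0\}_{j\in J}$, yields
\[
\dim\bigl(\overline{S}\cap\{u_j=0 \text{ for } j\in J\}\bigr) \;\leq\; r - 2|J|.
\]
Choosing a semi-algebraic triangulation of $\overline{S}$ near $p$ (and using \cref{blow-up of admissible is also admissible} to keep admissibility if a toric blow-up is needed to obtain transversal coordinates), the projection $\pi_J\colon \overline{S}\cap U\to\C^J$, $x\mapsto (u_j(x))_{j\in J}$, has generic fiber dimension at most $r-2|J|$, so by a semi-algebraic Sard/coarea estimate the measure of the tube $\{x\in S \cap U : \rho_j(x)<\epsilon\ \forall j\in J\}$ is $O(\epsilon^{2|J|})$. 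Integrating $\prod_{j\in J}\rho_j^{-1}$ against this measure reduces, via Fubini on the product $\C^J$, to the elementary fact $\int_{|u|\leq 1}|u|^{-1}\,d\mu_{\C}=2\pi<\infty$, giving the desired absolute convergence.

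The main obstacle is this last reduction: turning the purely dimensional admissibility inequality into a quantitative measure estimate. One needs a careful semi-algebraic stratification of $\overline{S}\cap U$ adapted to the projection $\pi_J$, and a Łojasiewicz-type inequality of the form $\mu_S(\{\rho_J<\epsilon\})\le C\epsilon^{2|J|}$ with constants uniform on compact pieces. Everything else is essentially bookkeeping: expanding the wedge, choosing coordinates, and invoking partition of unity; the crux is that admissibility is precisely the sharp hypothesis matching the order of the poles of $d\log f_i$ along $\overline{O(\sigma)}$.
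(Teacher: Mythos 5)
The paper's own proof is essentially a two-line reduction: using \cref{image of admissible is admissble} one may assume $r=n$, after subdividing one may assume $T_\Sigma=(\P^1)^n$, and then the statement is literally \cite[Theorem 4.4]{HKT20}, which the paper does not reprove. You instead attempt a self-contained analytic argument, which is a genuinely different route, but the key quantitative step is false. You assert that the admissibility bound $\dim\bigl(\overline{S}\cap\{u_j=0\text{ for }j\in J\}\bigr)\le r-2|J|$ forces
\[
\mu_S\bigl(\{x\in S\cap U:\rho_j(x)<\epsilon\ \forall j\in J\}\bigr)=O\bigl(\epsilon^{2|J|}\bigr),
\]
and then integrate $\prod_{j\in J}\rho_j^{-1}$ against this via Fubini on $\C^J$. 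That exponent is not a consequence of the dimension count. Take $n=r=2$, $J=\{1\}$, and
\[
S=\bigl\{(u_1,u_2)\in(\C^\times)^2:\ u_2=1+|u_1|^{1/2},\ 0<|u_1|\le \tfrac12\bigr\}.
\]
Then $\overline{S}\subset(\P^1)^2$ is admissible: $\overline{S}$ meets $\{u_1=0\}$ only in $\{(0,1)\}$, a $0$-dimensional set with $0\le r-2$, meets no other boundary divisor since $1\le|u_2|\le 2$, and meets no stratum of codimension $\ge 2$. Yet in polar coordinates $u_1=\rho_1e^{i\theta_1}$ the area element on $S$ is $\rho_1\sqrt{1+\tfrac{1}{4\rho_1}}\,d\rho_1\,d\theta_1\approx\tfrac12\sqrt{\rho_1}\,d\rho_1\,d\theta_1$ near $\rho_1=0$, so $\mu_S(\{\rho_1<\epsilon\})\sim\epsilon^{3/2}$, which is not $O(\epsilon^{2})$. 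Replacing the exponent $1/2$ by any rational $a>0$ gives admissible examples with $\mu_S(\{\rho_1<\epsilon\})\sim\epsilon^{1+a}$ and $a$ arbitrarily small, so no uniform exponent can be read off from the dimension of $\overline{S}\cap\{u_1=0\}$ alone; equivalently, the pushforward density of $\mu_S$ under $\pi_J$ need not be bounded, so the "Fubini" reduction to $\int_{|u|\le 1}|u|^{-1}\,d\mu_\C$ does not apply.

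In the above examples the integral $\int_S\rho_1^{-1}\,d\mu_S$ does in fact converge, but only because the unbounded pushforward density remains integrable — and establishing that this always happens under admissibility is exactly the analytic substance of \cite[Theorem 4.4]{HKT20}, not the bookkeeping you describe. You correctly flag this estimate as the crux, but the Łojasiewicz-type inequality with exponent $2|J|$ is simply false, so the argument as written does not close; you would need to prove a weaker inequality with some exponent strictly larger than $|J|$ (and to handle all faces $J$ and all charts compatibly), which amounts to redoing the Hanamura--Kimura--Terasoma convergence argument rather than replacing it.
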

\begin{proof}
  By \cref{image of admissible is admissble}, 
  we may assume $r=n$.
  By subdividing $S$, we may assume $T_{\Sigma}=(\P^1)^n$.
  This case is just \cite[Theorem 4.4]{HKT20}.
\end{proof}

\begin{lem}\label{image of admissible is admissble}
  Let $S \subset (\C^{\times})^n$  be as in \cref{integration on admissible set is well-defined}
  and $\varphi \colon (\C^{\times})^n \to (\C^{\times})^r$ a torus morphism such that $\varphi(S)$ is of pure dimension $r$.
  Then 
  for some proper fan $\Sigma'$ in $(\C^{\times})^r$, the closure $\overline{\varphi(S)} \subset T_{\Sigma'}(\C)$ is admissible.
\end{lem}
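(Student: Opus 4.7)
The plan is to extend $\varphi$ to a toric morphism $\tilde{\varphi} \colon T_{\tilde{\Sigma}} \to T_{\Sigma'}$ between smooth proper toric varieties, arranged so that the induced lattice map $\varphi_* \colon \Z^n \to \Z^r$ sends every cone of $\tilde{\Sigma}$ exactly onto a cone of $\Sigma'$, and then to verify admissibility of $\overline{\varphi(S)} = \tilde{\varphi}(\overline{S})$ orbit by orbit using the admissibility already known for $\overline{S}$.

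First I would construct the compatible fans. The collection $\{\varphi_*(\sigma) : \sigma \in \Sigma\}$ is a finite family of rational polyhedral cones in $\R^r$. I choose a smooth complete fan $\Sigma'$ in $\R^r$ that refines their arrangement, so that each $\varphi_*(\sigma)$ is a union of cones of $\Sigma'$. I then subdivide each $\sigma \in \Sigma$ into the cones $\varphi_*^{-1}(\tau'') \cap \sigma$ for $\tau'' \in \Sigma'$ with $\tau'' \subset \varphi_*(\sigma)$; these subdivisions agree on common faces of the various $\sigma$'s and so define a refinement $\tilde{\Sigma}$ of $\Sigma$ (after a further smooth refinement if needed). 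By construction $\varphi_*(\tilde{\tau}) \in \Sigma'$ for every $\tilde{\tau} \in \tilde{\Sigma}$, so the lattice map extends to a toric morphism $\tilde{\varphi} \colon T_{\tilde{\Sigma}} \to T_{\Sigma'}$. By \cref{blow-up of admissible is also admissible}, $\overline{S}$ remains admissible in $T_{\tilde{\Sigma}}$; and since $T_{\tilde{\Sigma}}$ is proper, $\tilde{\varphi}(\overline{S}) = \overline{\varphi(S)}$ as closed subsets of $T_{\Sigma'}(\C)$.

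Next I would carry out the dimension count. Fix $\sigma' \in \Sigma'$. Because $\varphi_*(\tau) \in \Sigma'$ for every $\tau \in \tilde{\Sigma}$ and a linear map sends the relative interior of a cone into the relative interior of its image cone, $\tilde{\varphi}$ maps $O(\tau) \subset T_{\tilde{\Sigma}}$ into $O(\varphi_*(\tau))$. Using $\tilde{\varphi}(A \cap \tilde{\varphi}^{-1}(B)) = \tilde{\varphi}(A) \cap B$ together with the orbit decomposition of $\overline{O(\sigma')}$ produces the finite union
$$\overline{\varphi(S)} \cap \overline{O(\sigma')}
   = \bigcup_{\substack{\tau \in \tilde{\Sigma} \\ \varphi_*(\tau) \succeq \sigma'}}
       \tilde{\varphi}(\overline{S} \cap O(\tau)).$$
For each such $\tau$, linearity of $\varphi_*$ gives $\dim \tau \geq \dim \varphi_*(\tau) \geq \dim \sigma'$, so the admissibility of $\overline{S}$ in $T_{\tilde{\Sigma}}$ yields
$$\dim \tilde{\varphi}(\overline{S} \cap O(\tau))
   \leq \dim(\overline{S} \cap \overline{O(\tau)})
   \leq r - 2 \dim \tau
   \leq r - 2 \dim \sigma'.$$
Taking the union over the finitely many $\tau$ gives $\dim(\overline{\varphi(S)} \cap \overline{O(\sigma')}) \leq r - 2 \dim \sigma'$, which is exactly the admissibility of $\overline{\varphi(S)} \subset T_{\Sigma'}(\C)$.

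The main obstacle is the fan-theoretic preparation: arranging the strong compatibility $\varphi_*(\tilde{\tau}) \in \Sigma'$, rather than merely $\varphi_*(\tilde{\tau}) \subset \sigma_{\tilde{\tau}} \in \Sigma'$. Without this, a cone $\tilde{\tau}$ could be sent into the relative interior of a strictly higher-dimensional cone of $\Sigma'$, so the crucial inequality $\dim \tilde{\tau} \geq \dim \sigma'$ would fail and the dimension count would collapse; the pullback subdivision $\varphi_*^{-1}(\tau'') \cap \sigma$ is precisely what rules this pathology out. The hypothesis that $\varphi(S)$ is of pure dimension $r$ enters implicitly, in that $\dim \overline{\varphi(S)} = r$ is the quantity appearing on the right-hand side of the admissibility inequality.
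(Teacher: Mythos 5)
Your proposal is correct and follows essentially the same route as the paper's proof: refine the source fan $\Sigma$ so that the images of its cones under $\varphi_*$ form a fan $\Sigma'$ (invoking \cref{blow-up of admissible is also admissible} to preserve admissibility upstairs), and then take that image fan. The paper's own proof stops there, simply declaring ``this is the required fan,'' whereas you go on to carry out the orbit-by-orbit dimension count via the decomposition $\overline{\varphi(S)} \cap \overline{O(\sigma')} = \bigcup_{\varphi_*(\tau)\succeq \sigma'} \tilde{\varphi}(\overline{S}\cap O(\tau))$ together with $\dim\tau \geq \dim\varphi_*(\tau) \geq \dim\sigma'$; this verification is what the paper leaves implicit, and making it explicit is a genuine improvement in rigor rather than a different approach.
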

\begin{proof}
  By \cref{blow-up of admissible is also admissible},
  we may assume that $\{\varphi(\sigma)\}_{\sigma \in \Sigma}$ is a fan. 
  (This $\varphi(\sigma)$ is the image of $\sigma$ under the linear map 
  $$\varphi \colon \Trop((\C^{\times})^n) \to \Trop((\C^{\times})^r)$$
  of Euclid spaces.)
  This is the required fan.
\end{proof}

\begin{rem}\label{remind face map in HKT}
  In \cite[Definition 3.8]{HKT20}, they defined the face map 
  $$ AC_r ((\P^1)^n(\C),D^n;\Q) \to AC_{r-2} ( (\P^1)^{n-1}(\C),D^{n-1};\Q)$$
  for a face $(\P^1)^{n-1}(\C) = \{0\} \times (\P^1)^{n-1}(\C)$ of $(\P^1)^{n}(\C)$.
  This is a generalization of the intersection products of closed subvarieties of $(\P^1)^n$ in generic position with $ \{0\} \times (\P^1)^{n-1}$.
  (For the definition of the boundaries $D^*$, see \cite{HKT20}. We do not need them here.)
  Let $T_{\Sigma'}$ be a smooth toric variety corresponding to a fan $\Sigma'$.
  For a $1$-dimensional cone $l \in \Sigma'$, in the same way, there is a map 
  $$ \partial_{\overline{O(l)}} \colon AC_r(T_{\Sigma'}(\C),\Q) \to AC_{r-2}(\overline{O(l)}(\C),\Q).$$
  We also call it the \emph{face map}.
  Here we define $AC_r(T_{\Sigma'}(\C),\Q)$ in the same way as those for smooth projective toric varieties,
  and $\overline{O(l)}(\C)$ is the closure in the toric variety $T_{\Sigma'} (\C)$.
  For simplicity,
  for a cone $ \sigma \in \Sigma'$ not containing  $l$ such that $\overline{O(\sigma)} \cap \overline{O(l)}$ is non-empty,
  we denote the face map 
  $$\partial_{\overline{O(l) }\cap  \overline{O(\sigma)} } \colon
   AC_{r}(\overline{O(\sigma)}(\C),\Q) \to AC_{r-2}(\overline{O(l)} \cap \overline{O(\sigma)}(\C),\Q)$$
  by $\partial_{\overline{O(l)}}$.
  Here the closures are taken in $T_{\Sigma'}(\C)$.
\end{rem}

\section{Weighted tropcializations of semi-algebraic chains}\label{Section Real semi-algebraic construction}
In this section, we shall prove that limits of integrals of differential forms given in \cref{section analytic} on admissble semi-algebraic subsets 
equal those of superforms on \textit{weighted tropicalizations}, which will be introduced in this section.
We also show that under a natural assumption, weighted tropicalizations give maps of tropical and the usual singular homology groups which are dual of $\Trop^*$.

Let $T_{\Sigma}$ be a smooth projective toric variety over $\C$ corresponding to a fan $\Sigma$ in $N_\R$,
and $X \subset T_{\Sigma}$ a smooth closed algebraic subvareity.

\subsection{Weighted tropicalizations}\label{subsection overview of semi-algebraic construction}
In this subsection, we will construct  $\wtTrop(V)$
for an admissble singular semi-algebraic chain $V \in C_r(X(\C),\Q)$.
We put $V_0:= V \cap \G_m^n$ a formal sum of closed semi-algebraic subsets. 
For a $1$-dimensional rational cone $l \subset \Trop(T_{\Sigma})$ 
such that $\relint (l) \subset N_\R$,
we put 
$v_l \in l \cap N_\R$
the primitive vector.

We shall construct an element $\wtTrop(V)$ in $  C_r^{\Trop} (\Trop(X),\C)$.
Let $P \in \Sigma$ be a cone. We fix an orientation of $P$.
Let $q:= \dim P$
and $l_1,\dots,l_q \in \Sigma$ be $1$-dimensional cones 
such that 
$\R_{\geq 0} \langle l_1,\dots,l_q\rangle = P,$ and
$v_{l_1} \wedge \dots \wedge v_{l_q} = 1_{\wedge^q \Span_\Z P}$.
We consider 
\begin{align}
& f_1 \wedge \dots \wedge f_{r-q} \\
\mapsto &
(-1)^{\frac{q(q-1)}{2}}
(f_1 \wedge \dots \wedge f_q, 1_{\wedge^q\Span_\Z P}) 
\int_{\partial_{\overline{O(P)}}(V) } 
-\frac{1}{2 \pi i}d \log f_{q+1} \wedge \dots \wedge -\frac{1}{2\pi i} d \log f_{r-q} \label{weighted tropicalization integration i}
\end{align}
for $f_i \in M$ with $f_i \in P^{\perp} \ (q+1 \leq i \leq r-q)$, 
where 
$(f_1 \wedge \dots \wedge f_q, 1_{\wedge^q\Span_\Z P}) $
is the pairing of $\wedge^q M$ and $\wedge^q N$
and 
$$
\partial_{\overline{O(P)}}(V) 
:=
\partial_{\overline{O(l_q)}}(\partial_{\overline{O(l_{q-1})}} (\dots \partial_{\overline{O(l_1)}}(V) \dots )).$$
This induces an element of $F_{r-q}(P,\Trop(X)) \otimes \C$.
We denote it by 
$\wtTrop(V)_P$.
We call it the \emph{weight} of $\Trop(V)$ at $P$.
We put 
$$\wtTrop(V):= \sum_{P \in \Sigma} \wtTrop(V)_P \otimes [P] \in C_r^{\Trop} (\overline{\Sigma},\C),$$
where $[P] \in C_q^{\sing}(P,\Z)$ is given by the fixed orientation,
and the fan $\overline{\Sigma}$ in $\Trop(T_{\Sigma})$ consists of cones $\overline{\sigma} \cap \overline{N_{\tau,\R}}$ $(\sigma,\tau \in \Sigma)$ (where the closures are taken in  $\Trop(T_\Sigma))$.
(Here we do not care about the difference between $[P]$ and a homeomorphism $\Delta^q \cong P$.)
We call $\wtTrop(V)$ the \emph{weighted tropicalization} of $V$.
We often identify $\wtTrop(V)$ and its subdivisions (i.e., subdivisions of singular chain parts).
Then we have $\wtTrop(V) \in C_r^{\Trop}(\Lambda,\C)$
for a fan structure $\Lambda$ of $\Trop(X)$.
By abuse of notation, we write simply
$$\wtTrop(V) \in C_r^{\Trop}(\Trop(X),\C).$$

The weighted tropicalization $\wtTrop(V)$  is independent of the toric varieties $T_{\Sigma}$ in the following sense.
\begin{lem}\label{weighted tropicalizations independent of the choice of fan}
 Let $\psi \colon T_{\Sigma'} \to T_{\Sigma} $ be a toric blow-up from a smooth toric variety $T_{\Sigma'}$.
 We put $V':= \overline{V_0}$ the closure in $T_{\Sigma'}(\C)$.
 Then
 we have  
 $$\psi_*(\wtTrop(V'))= \wtTrop(V)  \in C_r^{\Trop} (\Trop(X),\C)  .$$
Here $\psi_*$ is the natural map of tropical chains induced from the map 
 $\psi \colon \Trop(T_{\Sigma'}) \to \Trop(T_{\Sigma}) $ of tropical toric varieties.
\end{lem}
\begin{proof}
  We may assume that $\psi \colon T_{\Sigma'} \to T_{\Sigma}$ is the blow-up at smooth toric center $\overline{O(\sigma)}$ $(\sigma \in \Sigma)$.
  Let $\tau \in \Sigma$ and  $\tau' \in \Sigma'$ be cones such that 
  $\sigma \subset \tau$ and
   $\Sigma' \ni \tau' \to \tau \in \Sigma$ (i.e., $\relint (\tau' ) \subset \relint(\tau)$).
  When $\dim \tau' \leq \dim \tau -1$,
  then since $V' \cap O(\tau')$ is contained in the inverse image of $V \cap O(\tau)$ 
  (which is the trivial $(\C^\times)^{\dim \tau -\dim \tau'}$-bundle), 
  the $\tau'$-component $\wtTrop(V')_{\tau'}$ is $0$.
  We assume $\dim \tau' = \dim \tau$.
  It suffices to show that
  $
  \psi \colon \partial_{\overline{O(\tau')}}(V')  \to
   \partial_{\overline{O(\tau)}}(V) $
  is a bijection (including multiplicities).
  To prove this, as in the proof of \cite[Proposition 3.10]{HKT20}, it suffices to show that 
  $$\psi_* ( [\overline{O(\tau')}] \cap [V'])= [\overline{O(\tau)}] \cap [V] $$
  in $H_{\dim V -2 \dim \tau}^{\sing}(V \cap \overline{O(\tau)} ,\partial V \cap \overline{O(\tau)},\Q)$.
  This is just projection formula.
  (See the proof of \cite[Proposition 3.10]{HKT20}.)
\end{proof}

Alessandrini \cite{Ale13} introduced the tropcialization $\Trop(V \cap \G_m^n(\C)) \subset N_\R$ of a semi-algebraic subset $V$ and showed that it is a finite union of rational polyhedral cones.
Supports of our weighted tropicalizations are contained in (the closure of) his tropicalizations. This follows from \cref{weighted tropicalizations independent of the choice of fan}.

\begin{rem}
For a closed complex algebraic subvariety $Y \in AC_{2p}(X(\C),\Q)$ of dimension $p$,
the classical weighted tropicalization $\wtTrop(Y)$ (see \cref{remark weight tropicalization of complex algebraic varieties}) coincides with the one defined in this section.
To see this, by \cite[Theorem 1.4]{LQ11} and \cref{weighted tropicalizations independent of the choice of fan}, we can reduce the assertion to the case that 
$\Lambda \subset \Sigma$ 
and
$Y \subset T_{\Lambda}$ is Sch\"{o}n compactification. 
Then 
this follows from  the fact that the classical weight $m_P$ (\cref{remark weight tropicalization of complex algebraic varieties}) is the intersection number $Y \cdot \overline{P}$ (\cite[Theorem 6.7.7]{MS15}). 
\end{rem}

\subsection{Comparison of integrals}\label{section comparison of maps}
We shall prove the equality of tropical integrals and limits of archimedean integrals.
 Let $V \in C_{p+q}(X,\C)$ be an admissible singular semi-algebraic chain 
 and $w \in \mathscr{A}_{\Trop}^{p,q}(\Trop(T_{\Sigma}))$.
For $\epsilon > 0$,
the $(p+q)$-form
$ -\epsilon \log \lvert \cdot \rvert^*(w) $
is a compactly-supported smooth $(p+q)$-form defined on $X(\C)$.
Hence by \cite[Theorem2.6]{HKT15},
the integral 
$$
 \int_V 
 -\epsilon \log \lvert \cdot \rvert^*(w) 
$$
is well-defined.
(See \cite[Section 1]{HKT15} for definitions of this integration, based on the Lebesgue integration.)

\begin{prp}\label{comparison of maps analytic and semi-algebraic}
 We have 
 $$ \int_{\wtTrop ( V )} w =
 \lim_{\epsilon \to 0} 
 \int_V 
 -\epsilon \log \lvert \cdot \rvert^*(w). 
 $$
\end{prp}

\begin{proof}
 We may and do assume the following.
  \begin{itemize}
    \item $X = T_{\Sigma}=\P^n$, and $V \subset \A^n \subset \P^n$.
    \item  We denote the coordinate of $\A^s \times \G_m^{n-s} (\C)$ by $z_1, \dots z_n$ and that of $\Trop(\A^s \times \G_m^{n-s})$ by $x_1,\dots,x_n$.
    \item 
    \begin{align*}
    w = f(x)  
    & d'x_{I_0}  \wedge d' x_{I_1} 
    \otimes
    d'' x_{I_0}  \wedge    d'' x_{I_2}
    \end{align*} 
    ($I_0  =\{1,\dots,r\}  \ (r \leq s),   I_j \subset \{r+1,\dots,n\} \ (j=1,2), 
     I_1 \cap I_2 = \emptyset$).
  \end{itemize}
  We put $V_0:= V \cap \G_m^n(\C)$.
Note that
$$ d (- \frac{\epsilon}{2} \log \overline{z})  \wedge
\frac{1}{2 \pi i} d (- \log z)
= d(-\epsilon \log r) \wedge \frac{1}{2\pi} d (-\theta)$$
for  $z=r e^{i \theta}$.
Hence 
\begin{align*}
 &-\epsilon \log \lvert \cdot \rvert^*(w)\\
 =&  f(-\epsilon \log \lvert z_1 \rvert , \dots, -\epsilon \log \lvert z_n \rvert)  
  d (-\frac{\epsilon}{2} \log \overline{z_{I_0}} )
  \wedge d (-\frac{\epsilon}{2} \log \overline{z_{I_2}})
  \wedge 
  d (-\frac{1}{2\pi i} \log z_{I_0})  
  \wedge 
  d (-\frac{1}{2\pi i} \log z_{I_1})
  \\
 =& (-1)^{ p \lvert I_2 \rvert }  f(x_1,\dots,x_r, -\epsilon \log \lvert z_{r+1} \rvert , \dots, -\epsilon \log \lvert z_n \rvert)  
  d x_{I_0} \wedge d (-\frac{1}{2 \pi} \theta_{I_0})
  \wedge   d (-\frac{1}{2\pi i} \log z_{I_1}) \wedge    d (-\frac{\epsilon}{2} \log \overline{z_{I_2}}).
\end{align*}
Its ``limit'' under $\epsilon \mapsto 0$ is 
 $$ (-1)^{p \lvert I_2 \rvert } f(x_1,\dots,x_r, 0, \dots,0)  
  d x_{I_0} \wedge d (-\frac{1}{2\pi} \theta_{I_0})
  \wedge   d (-\frac{1}{2\pi i} \log z_{I_1}) \wedge    0^{\lvert I_2 \rvert }.$$
 The integration of this ``limit'' on 
 $$[0,\infty]^r \times [\overline{V_0} \cap (O(\sigma_r)(\C) \times (S^1)^r) ]$$
 is the integration $ \int_{\wtTrop(V)} w $.
 Hence it is enough to justify this ``limit'' and prove compatibility of the integration and this ``limit''. 
 This will be done by a usual argument of the Lebesgue integration.

 First, we justify the ``limit''.
For $J_1 \subset I_1$ and $J_2 \subset I_2$,
we put 
\begin{align*}
 &-\epsilon \log \lvert \cdot \rvert^*(w)_{J_1,J_2} \\
 :=& (-1)^{p \lvert I_2 \rvert } f(x_1,\dots,x_r,  -\epsilon \log \lvert z_{r+1} \rvert , \dots, -\epsilon \log \lvert z_n \rvert)  
  d x_{I_0} \wedge d(- \frac{1}{ 2\pi} \theta_{I_0}) \\
 & \wedge   d (-\frac{1}{2\pi i} \log r_{J_1})
  \wedge   d (-\frac{1}{2\pi} \theta_{I_1 \setminus J_1})
  \wedge    d (-\frac{\epsilon}{2} \log r_{J_2})
  \wedge     d ( \frac{i \epsilon}{2} \theta_{I_2 \setminus J_2}).
\end{align*}
We have 
$$
 -\epsilon \log \lvert \cdot \rvert^*(w)
= \sum_{J_1 \subset I_1,\ J_2 \subset I_2} 
-\epsilon \log \lvert \cdot \rvert^*(w)_{J_1,J_2}. $$
We put 
\begin{align*}
\varphi_{J_1,J_2,\epsilon} \colon & (\C^\times)^r \times (\C^\times)^{J_1 \cup J_2} \times (\C^\times)^{I_1 \cup I_2 \setminus (J_1 \cup J_2)} \times (\C^\times)^{\{r+1,\dots,n\} \setminus (I_1 \cup I_2)}  \\
& \to  \R^r \times [0,2 \pi]^r \times \R_{>0}^{J_1 \cup J_2} \times [0,2 \pi]^{I_1 \cup I_2 \setminus (J_1 \cup J_2)}
\end{align*}
the composition of the projection 
$$(\C^\times)^r \times (\C^\times)^{J_1 \cup J_2} \times (\C^\times)^{I_1 \cup I_2 \setminus (J_1 \cup J_2)} \times (\C^\times)^{\{r+1,\dots,n\} \setminus (I_1 \cup I_2)}  
\to (\C^\times)^r \times (\C^\times)^{J_1 \cup J_2} \times (\C^\times)^{I_1 \cup I_2 \setminus (J_1 \cup J_2)} $$
and
the product of 
$$(\C^\times)^r \ni (z_i = r_i e^{i \theta_i})_i \mapsto (x_i:= -\epsilon \log r_i,\theta_i)_i \in \R^r \times [0,2 \pi]^r,$$
$$(\C^\times)^{J_1 \cup J_2} \ni (z_i = r_i e^{i \theta_i})_i \mapsto (r_i)_i \in \R_{>0}^{J_1 \cup J_2},$$
$$(\C^\times)^{I_1 \cup I_2 \setminus (J_1 \cup J_2)} \ni (z_i = r_i e^{i \theta_i})_i \mapsto (\theta_i)_i \in  [0,2 \pi]^{I_1 \cup I_2 \setminus (J_1 \cup J_2)}.$$
We put $(\varphi_{J_1,J_2,\epsilon})_*([V_0])$ the push-forward of $V_0$ as a formal sum of $r$-dimensional closed oriented semi-algebraic subsets 
in the same way as the push-forward of singular semi-algebraic chains. (Note that $V_0$ and $(\varphi_{J_1,J_2,\epsilon})_*([V_0])$ are in general not compact.) 
For each semi-algebraic set $ U \subset \varphi_{J_1,J_2,\epsilon} (V_0)$ such that 
the inverse image $\varphi_{J_1,J_2,\epsilon}^{-1}(U) \cap V_0$ is a disjoint union of finitely many semi-algebraic sets $U_s$ which are homeomorphic to $U$,
we put 
$$f_{\epsilon}(a):= \sum_s f(a_1,\dots,a_r,
-\epsilon \log \lvert z_{r +1} \rvert_s (e^{-\frac{a_i}{\epsilon}},b_i,c_j,d_k) , \dots, 
-\epsilon \log \lvert z_{n} \rvert_s (e^{-\frac{a_i}{\epsilon}},b_i,c_j,d_k) )$$
for $a=(a_i,b_i,c_j,d_k) \in U$,
where we denote the restriction of 
$ -\epsilon \log \lvert z_{l} \rvert $
on $U_s$ by
$ -\epsilon \log \lvert z_{l} \rvert_s$,
and
we denote it
as a function on $\varphi_{J_1,J_2,\epsilon}(U_s)$ by 
$ -\epsilon \log \lvert z_{l} \rvert_s (e^{-\frac{x_i}{\epsilon}},\theta_i,r_j,\theta_k) $.
We define a function $f_{\epsilon}$ on 
$$\R^r \times [0,2 \pi]^r \times \R_{>0}^{J_1 \cup J_2} \times [0,2 \pi]^{I_1 \cup I_2 \setminus (J_1 \cup J_2)}$$
by putting $f_{\epsilon}(a)=0$ for  $a $ such that $\varphi_{J_1,J_2,\epsilon}^{-1}(a) \cap V_0$ is empty or an infinite set.
Then we have
\begin{align*}
 &\int_V -\epsilon \log \lvert \cdot \rvert^*(w)_{J_1,J_2} \\
  =&\int_{
  \R^r \times [0,2 \pi]^r \times \R_{>0}^{J_1 \cup J_2} \times [0,2 \pi]^{I_1 \cup I_2 \setminus (J_1 \cup J_2)} }
  (-1)^{p \lvert I_2 \rvert}
  \frac{1}{r^{J_1}} \cdot \frac{1}{r^{J_2}} \cdot  
 f_\epsilon  \\
  & d x_{I_0} \wedge d(-\frac{1}{ 2\pi}  \theta_{I_0}) 
  \wedge   d (-\frac{1}{2\pi i}  r_{J_1})
  \wedge   d (-\frac{1}{2\pi} \theta_{I_1 \setminus J_1})
  \wedge    d (-\frac{\epsilon}{2}  r_{J_2})
  \wedge     d ( \frac{i \epsilon}{2} \theta_{I_2 \setminus J_2}).
\end{align*}
For 
$$a= (a_i,b_i,c_j,d_k) \in 
[0,\infty)^r \times \pr(\overline{V_0} \cap (O(\sigma_r)(\C) \times (S^1)^r) ) \subset 
\R^r \times [0,2 \pi]^r \times \R_{>0}^{J_1 \cup J_2} \times [0,2 \pi]^{I_1 \cup I_2 \setminus (J_1 \cup J_2)}$$
and $U_s$ such that 
$\varphi_{J_1,J_2,\epsilon}(U_s)$ contains $a$ for any small $\epsilon$,
the limit
$$\lim_{\epsilon \to 0} \log \lvert z_l \rvert_s (e^{\frac{- a_i}{\epsilon}}, b_i,c_j,d_k) \ ( r+1 \leq l)$$
exists and takes a finite value.
Hence for almost all such a $a$, 
we have 
$$\lim_{\epsilon \to 0} f_{\epsilon}(a) 
= 1_{ \pr_*([\overline{V_0} \cap (O(\sigma_r)(\C) \times (S^1)^r)] )} (b_i,c_j,d_k) f(a_1,\dots,a_r,0,\dots,0),$$
where we put
$$ \pr \colon O(\sigma_r)(\C) \times (S^1)^r
\to \R_{>0}^{J_1 \cup J_2} \times [0, 2 \pi]^{I_1 \cup I_2 \setminus (J_1 \cup J_2)} \times [0,2\pi]^r$$
the map given by 
$z_j=r_j e^{\theta_j} \mapsto r_j \ (j \in J_1 \cup J_2)$
and
$z_k=r_k e^{\theta_k} \mapsto \theta_k \ (k \in I_1 \cup I_2 \setminus (J_1 \cup J_2))$,
and
we put  $\pr_*([\overline{V_0} \cap (O(\sigma_r)(\C) \times (S^1)^r)])$ 
the push-forward of the semi-algebraic chain 
$[\overline{V_0} \cap (O(\sigma_r)(\C) \times (S^1)^r)]$
and we put  
$$1_{ \pr_*([\overline{V_0} \cap (O(\sigma_r)(\C) \times (S^1)^r)] )}$$  an integrable function on 
$$
 [0,2 \pi]^r \times \R_{>0}^{J_1 \cup J_2} \times [0,2 \pi]^{I_1 \cup I_2 \setminus (J_1 \cup J_2)}$$
such that 
for any integrable $(p+q)$-form $g$, 
we have 
$$\int_{
 [0,2 \pi]^r \times \R_{>0}^{J_1 \cup J_2} \times [0,2 \pi]^{I_1 \cup I_2 \setminus (J_1 \cup J_2)}}
1_{ \pr_*([\overline{V_0} \cap (O(\sigma_r)(\C) \times (S^1)^r)] )} \cdot g 
=  \int_{ [\overline{V_0} \cap (O(\sigma_r)(\C) \times (S^1)^r)]} \pr^* g. $$
Consequently, we have 
\begin{align*}
   & \sum_{J_1,J_2}\int_{
  \R^r \times [0,2 \pi]^r \times \R_{>0}^{J_1 \cup J_2} \times [0,2 \pi]^{I_1 \cup I_2 \setminus (J_1 \cup J_2)} }
  \lim_{\epsilon \to 0}
  (-1)^{p \lvert I_2 \rvert}
  \frac{1}{r^{J_1}} \cdot \frac{1}{r^{J_2}} \cdot  
 f_\epsilon  \\
  & d x_{I_0} \wedge d(-\frac{1}{ 2\pi}  \theta_{I_0}) 
  \wedge   d (- \frac{1}{2\pi i} r_{J_1})
  \wedge   d (-\frac{1}{2\pi} \theta_{I_1 \setminus J_1})
  \wedge    d (-\frac{\epsilon}{2}  r_{J_2})
  \wedge     d ( \frac{i \epsilon}{2} \theta_{I_2 \setminus J_2})\\
  =& 
\int_{\wtTrop (V)}w
\end{align*}

Next, we will show that the integration is compatible with limit.
To see this it is enough to show that  
  $$\frac{1}{r^{J_1 \cup J_2}}\lvert 
  f_\epsilon   \rvert
   $$
   is bounded above by an integrable function on
$$ \R^r \times [0,2 \pi]^r \times \R_{>0}^{J_1 \cup J_2} \times [0,2 \pi]^{I_1 \cup I_2 \setminus (J_1 \cup J_2)}.$$
By the boundary condition of $w$ (\cref{def:forms}), 
there exists an $\R$-valued integrable function  $h$ on $\R^r$ 
such that $\pr_{\R^n \to \R^r}^*h \geq \lvert f \rvert $ on $\R^n$,
where $\pr_{\R^n \to \R^r}$ is the projection to the first $r$ coordinates.
Then we have
$$ C \cdot 1_{\R^r \times \psi_{J_1,J_2}(\varphi_{J_1,J_2,\epsilon}(V_0))} \cdot \pr^*_{
\R^r \times [0,2 \pi]^r \times \R_{>0}^{J_1 \cup J_2} \times [0,2 \pi]^{I_1 \cup I_2 \setminus (J_1 \cup J_2)} \to \R^r}
h \geq \lvert f_{\epsilon} \rvert$$ 
on 
$$\R^r \times [0,2 \pi]^r \times \R_{>0}^{J_1 \cup J_2} \times [0,2 \pi]^{I_1 \cup I_2 \setminus (J_1 \cup J_2)} ,$$
where 
$C \in \R$ is a constant,
and
we put 
$$ \psi_{J_1,J_2} \colon 
\R^r \times [0,2 \pi]^r \times \R_{>0}^{J_1 \cup J_2} \times [0,2 \pi]^{I_1 \cup I_2 \setminus (J_1 \cup J_2)}
 \to  [0,2 \pi]^r \times \R_{>0}^{J_1 \cup J_2} \times [0,2 \pi]^{I_1 \cup I_2 \setminus (J_1 \cup J_2)}$$
the projection.
Note that $$\R^r \times \psi_{J_1,J_2}(\varphi_{J_1,J_2,\epsilon}(V_0))$$ is independent of $\epsilon $.
We have
\begin{align*}
&\int_{\R^r \times [0,2 \pi]^r \times \R_{>0}^{J_1 \cup J_2} \times [0,2 \pi]^{I_1 \cup I_2 \setminus (J_1 \cup J_2)}}
 \frac{1}{r^{J_1 \cup J_2}} \cdot 1_{\R^r \times \psi_{J_1,J_2}(\varphi_{J_1,J_2,\epsilon}(V_0))} \\
& \cdot  \pr^*_{
\R^r \times [0,2 \pi]^r \times \R_{>0}^{J_1 \cup J_2} \times [0,2 \pi]^{I_1 \cup I_2 \setminus (J_1 \cup J_2)} \to \R^r} (h) d \mu \\
=& \int_{\R^r}h d \mu' \int_{\psi_{J_1,J_2}(\varphi_{J_1,J_2,\epsilon}(V_0))} \frac{1}{r^{J_1 \cup J_2}}d \mu'',
\end{align*}
where $d \mu , d \mu', d \mu''$ are the Lebesgue  measures on $\R^{p+q}$, $\R^r$, and $\R^{p+q-r}$, respectively.
Since $h$ is integrable and $V$ is admissible, by \cite[Theorem 4.4]{HKT20} (\cref{integration on admissible set is well-defined}),
this integration takes finite value.
Hence by a limit argument of the Lebesgue integration,
we have 
\begin{align*}
  & \lim_{\epsilon \to 0}  \int_V -\epsilon \log \lvert \cdot \rvert^*(w) \\
  =& \sum_{J_1,J_2}\lim_{\epsilon \to 0}  \int_V -\epsilon \log \lvert \cdot \rvert^*(w)_{J_1,J_2} \\
  = &\sum_{J_1,J_2}  \int_{ (x_i,\theta_i,r_j,\theta_k) \in \R^r \times [0,2 \pi]^r \times \R_{>0}^{J_1 \cup J_2} \times [0,2 \pi]^{I_1 \cup I_2 \setminus (J_1 \cup J_2)}}\\
  & \lim_{\epsilon \to 0}
  (-1)^{p \lvert I_2 \rvert }
  \frac{1}{r^{J_1 \cup J_2}}
  f_{\epsilon} (x_1,\dots,x_r, -\epsilon \log \lvert z_{r+1} \rvert
  (e^{\frac{- x_i}{\epsilon}}, \theta_i,r_j,\theta_k), \dots,-\epsilon \log \lvert z_n \rvert (e^{\frac{x_i}{\epsilon}}, \theta_i,r_j,\theta_k) ) \\ 
  & d x_{I_0} \wedge d (-\frac{1}{ \pi} \theta_{I_0}) 
  \wedge   d (-\frac{1}{2\pi i}  r_{J_1})
  \wedge   d (-\frac{1}{2\pi} \theta_{I_1 \setminus J_1})
  \wedge    d (-\frac{\epsilon}{2}  r_{J_2})
  \wedge    d ( \frac{i\epsilon}{2} \theta_{I_2 \setminus J_2}) \\
  =& \int_{\wtTrop(V)} w
\end{align*}
\end{proof}

\subsection{Induced maps of homology}\label{subsection rationality}
We study the maps of homology groups induced by weighted tropicalizations.
\begin{prp}\label{compatibility of phase tropicalization and boundary maps}
  The map $\wtTrop$ is compatible with the boundary maps, i.e., 
  $$\wtTrop \circ \partial = \partial \circ \wtTrop.$$
\end{prp}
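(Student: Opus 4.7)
The plan is to verify the identity $\wtTrop(\partial V)=\partial\,\wtTrop(V)$ cone by cone. For each cone $P'\in\Sigma$ with $\relint(P')\subset N_\R$, set $q'=\dim P'$ and compare the $P'$-components of both sides. By formula (\ref{weighted tropicalization integration i}) applied to $\partial V\in AC_{r-1}(X(\C),\Q)$, the left-hand side $\wtTrop(\partial V)_{P'}$ is obtained as a pairing with $1_{\wedge^{q'}\Span_\Z P'}$ times an integral of a logarithmic form over $\partial_{\overline{O(P')}}(\partial V)$. Expanding the tropical boundary according to the description of the $i_{P'\subset P}$-maps recalled in \cref{remdefofF_Pmap}, the right-hand side $(\partial\,\wtTrop(V))_{P'}$ is a signed sum, over cones $P\in\Sigma$ with $P'\subset P$ and $\dim P=q'+1$, of the transported weights $i_{P'\subset P}(\wtTrop(V)_P)$; each of these is itself an integral over $\partial_{\overline{O(P)}}(V)$ paired with $1_{\wedge^{q'+1}\Span_\Z P}$.

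The bridge between the two sides combines two ingredients. First, the Hanamura-Kimura-Terasoma formalism for the face map $\partial_{\overline{O(-)}}$ on $AC_*$ (compare \cref{remind face map in HKT} and the proof of \cref{weighted tropicalizations independent of the choice of fan}) yields an identity of the schematic form
\[
\partial_{\overline{O(P')}}(\partial V) \;=\; \pm\,\partial\bigl(\partial_{\overline{O(P')}}(V)\bigr) \;+\; \sum_{\substack{P\supset P'\\ \dim P=q'+1}}\pm\,\partial_{\overline{O(P)}}(V),
\]
in which the correction terms capture the pieces of $\partial(\partial_{\overline{O(P')}}V)$ that lie in the toric divisors $\overline{O(P)}$ of the toric variety $\overline{O(P')}$. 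Second, Stokes' theorem applied to the closed logarithmic form $-\tfrac{1}{2\pi i}d\log f_{q'+1}\wedge\cdots$ on $\partial_{\overline{O(P')}}(V)$ (convergence is guaranteed by \cref{integration on admissible set is well-defined}), combined with the Poincar\'e residue $\Res_{\overline{O(l_P)}}(-\tfrac{1}{2\pi i}d\log f)=\langle f,v_{l_P}\rangle$ along a ray $l_P$ such that $P=P'+\R_{\geq 0}l_P\in\Sigma$, converts $\int_{\partial(\partial_{\overline{O(P')}}V)}$ into a sum over $P$ of residue contributions of the form $\int_{\partial_{\overline{O(P)}}(V)}(\text{reduced wedge})$ weighted by inner products with $v_{l_P}$.

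Substituting these into the unfolded left-hand side, $\wtTrop(\partial V)_{P'}$ becomes a sum over $P\supset P'$ of $\int_{\partial_{\overline{O(P)}}(V)}(\text{reduced form})$ times a determinantal combination of the $f_i$'s. Matching with the right-hand side is then an elementary Laplace-expansion identity along the extra ray: choosing orientations so that $1_{\wedge^{q'+1}\Span_\Z P}=1_{\wedge^{q'}\Span_\Z P'}\wedge v_{l_P}$, the two determinantal pairings coincide, and the simplicial sign from $\partial[P]=\sum_i(-1)^i[P^i]$ is absorbed into the sign attached to the boundary face $P'\subset P$. The hard part will be the systematic bookkeeping of signs: the prefactors $(-1)^{q(q-1)/2}$ appearing in $\wtTrop$ at adjacent dimensions, the sign in the HKT commutator of $\partial$ with $\partial_{\overline{O(-)}}$, the Stokes sign, and the Poincar\'e residue sign must all combine consistently with the orientation convention of \cref{definition wedge 1 span P}. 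A subsidiary check is that the formula (\ref{weighted tropicalization integration i}) descends through the natural quotient $\wedge^{r-q}M\twoheadrightarrow F^{r-q}(P)$, so the comparison genuinely takes place in $F_{r-q}(P)\otimes\C$; this is guaranteed by the realization of tropical $K$-classes as logarithmic differentials (\cref{tropcial K-group another expression}), which implies that any element of $\wedge^{r-q}M$ in the kernel of the quotient produces a vanishing integral.
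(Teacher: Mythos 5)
Your plan identifies exactly the same two ingredients the paper's (one-line) proof cites: HKT's compatibility of the face maps with the usual boundary map (their Proposition 3.6(1)) and HKT's generalized Cauchy formula (their Theorem 4.3). So the overall route is the right one. The one place where the plan needs tightening is your ``first ingredient'': the schematic identity
\[
\partial_{\overline{O(P')}}(\partial V) = \pm\,\partial\bigl(\partial_{\overline{O(P')}}(V)\bigr) + \sum_{P\supset P'}\pm\,\partial_{\overline{O(P)}}(V)
\]
conflates the two ingredients. HKT's Proposition 3.6(1) gives the \emph{clean} commutator $\partial_{\overline{O(P')}}\circ\partial = \pm\,\partial\circ\partial_{\overline{O(P')}}$ on admissible chains, with no correction terms; the contributions from the deeper cones $P\supset P'$ arise only afterwards, when the generalized Cauchy formula (your ``Stokes + Poincar\'e residue'' step) is applied to $\int_{\partial(\partial_{\overline{O(P')}}V)}(\text{log form})$ inside the toric variety $\overline{O(P')}$. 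As written, applying the Cauchy formula to the first summand while keeping the correction terms from the first step would double-count the residue contributions — so that schematic identity should be deleted and replaced by the bare commutator. Once that is done, the rest of your outline (matching the residue sum against $(\partial\,\wtTrop(V))_{P'}$ via the Laplace expansion $1_{\wedge^{q'+1}\Span_\Z P}=1_{\wedge^{q'}\Span_\Z P'}\wedge v_{l_P}$) is the correct closing step, and the remaining work is indeed the sign bookkeeping you flag. Finally, the subsidiary observation that (\ref{weighted tropicalization integration i}) factors through $F^{r-q}(P)$ is needed for the well-definedness of $\wtTrop$ itself, not specifically for the boundary compatibility; the paper silently absorbs this into the definition.
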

\begin{proof}
  This follows from the compatibility of the face maps and the usual boundary map \cite[Proposition 3.6(1)]{HKT20} and the generalized Cauchy formula \cite[Theorem 4.3]{HKT20}.
\end{proof}

By \cref{compatibility of phase tropicalization and boundary maps},
the map 
$$\wtTrop \colon AC_r(X(\C),\Q)   \to C_r^{\Trop}(\Trop(X),\C)/(\text{subdivisions})$$
induces a map 
$$\wtTrop \colon H_r(AC_*(X(\C),\Q))  \to H_r^{\Trop}(\Trop(X),\C).$$
of homology groups.

\begin{prp}\label{rationality of map}
  Let $V \in AC_r(X(\C),\Q)$ such that $\partial(V)=0$.
  Then we have
  $$\wtTrop(V) \in C_r(\Trop(X),\Q).$$
\end{prp}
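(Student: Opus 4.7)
The plan is to reduce the assertion, cone-by-cone, to the rationality of certain logarithmic period integrals on face chains, and then to identify each such integral as a pairing between an integer cohomology class and a rational Borel--Moore homology class on a torus.

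Since $\wtTrop(V) = \sum_{P \in \Sigma} \wtTrop(V)_P \otimes [P]$, it suffices to show, for each cone $P \in \Sigma$ of dimension $q$, that the weight $\wtTrop(V)_P$ lies in $F_{r-q}(P) \otimes \Q$. Viewing this weight as a linear functional on $\wedge^{r-q}M$ factoring through $F^{r-q}(P)$, and noting that the combinatorial factor $(f_1 \wedge \cdots \wedge f_q, 1_{\wedge^q\Span_\Z P})$ is an integer whenever $f_i \in M$, the problem reduces to showing that for every $f_{q+1}, \ldots, f_{r-q} \in M \cap P^\perp$ the period
$$I_P \;:=\; \int_{\partial_{\overline{O(P)}}(V)} -\frac{1}{2\pi i} d \log f_{q+1} \wedge \cdots \wedge -\frac{1}{2\pi i} d \log f_{r-q}$$
is rational. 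Using the compatibility of face maps with the singular boundary \cite[Proposition 3.6(1)]{HKT20}, one has $\partial W = \partial_{\overline{O(P)}}(\partial V) = 0$ for $W := \partial_{\overline{O(P)}}(V)$, so $W \in AC_{r-2q}(\overline{O(P)}(\C), \Q)$ is a cycle; admissibility, preserved by face maps and refinements (\cref{blow-up of admissible is also admissible} and \cref{image of admissible is admissble}), guarantees that $W$ meets every smaller toric stratum in real codimension at least two, so the restriction to $O(P)(\C) \cong (\C^\times)^{n-q}$ is a top-dimensional rational Borel--Moore cycle on which the integrand converges absolutely by \cref{integration on admissible set is well-defined}.

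The main step is to exhibit $I_P$ as a pairing of integer cohomology against rational homology. Each one-form $-\frac{1}{2\pi i} d \log f_i$ is the pullback along the character $f_i \colon O(P) \to \C^{\times}$ of the canonical integer generator of $H^1(\C^{\times}, \Z)$, so the wedge product represents a class in $H^{r-2q}(O(P)(\C), \Z)$. Pushing $W|_{O(P)}$ forward along $\phi := (f_{q+1}, \ldots, f_{r-q}) \colon O(P) \to (\C^{\times})^{r-2q}$, and completing to an admissible cycle in a smooth toric compactification via \cref{image of admissible is admissble}, the integral $I_P$ becomes the integration of a generator of $H^{r-2q}((\C^{\times})^{r-2q}, \Z)$ against a rational closed cycle of top real dimension, which is manifestly a $\Q$-linear combination of winding-number degrees.

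The hard part will be making this pairing completely rigorous: since the logarithmic form is not compactly supported and the restricted cycle is not compact in $O(P)$, the naive compact de Rham pairing does not immediately apply. One must either invoke the admissibility framework of Hanamura--Kimura--Terasoma (their Stokes theorem and generalised Cauchy formula) to identify the convergent integral with a compactly supported cohomology--Borel--Moore homology pairing, or, via toric completion and semi-algebraic triangulation, reduce to the genuinely compact top-degree case on a smooth projective toric variety, where integrality of periods of wedges of $d\log$'s is classical. Once this is handled, summing the resulting rational weights over $P \in \Sigma$ yields $\wtTrop(V) \in C_r(\Trop(X), \Q)$ as claimed.
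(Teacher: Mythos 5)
Your initial reduction --- showing that $\wtTrop(V)_P \in F_{r-q}(P)\otimes\Q$ boils down to rationality of the log period $I_P$, using the compatibility of face maps with $\partial$ and rationality of the pairing with $1_{\wedge^q\Span_\Z P}$ --- is the same reduction the paper makes, to \cref{rationality of logarithmic integration}. But the central claim of your proof, that $I_P$ can be exhibited as a pairing of an integer cohomology class against a Borel--Moore homology class, has a genuine gap that is not merely a technical loose end. The admissibility condition only forces $W:=\partial_{\overline{O(P)}}(V)$ to meet each smaller toric stratum in real codimension $\geq 2$; for $r-2q\geq 2$ the cycle is genuinely non-compact in $O(P)(\C)$, and the log form is not compactly supported, so there is no canonical pairing $H^k\times H_k^{BM}\to\Q$ computed by integration. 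Worse, the period $\int_W \omega$ really does \emph{not} factor through the admissible homology class of $W$: if $W-W'=\partial Z$ with $Z$ admissible, the generalized Cauchy formula \cite[Theorem 4.3]{HKT20} computes $\int_{\partial Z}\omega$ as a sum of lower-dimensional log periods on boundary intersections, and these are nonzero in general. So there is no well-defined pairing to evaluate, and ``winding-number degree'' is not available once the cycle touches the boundary.

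The paper proves \cref{rationality of logarithmic integration} by turning exactly this obstruction into the engine of an induction on $r$: the moving lemma \cref{admissible moving lemma} writes $V = W + \partial Z$ with $W$ a complex algebraic cycle; $\int_W\omega = 0$ because the integrand is a holomorphic $r$-form and $W$ has complex dimension $r/2$; and $\int_{\partial Z}\omega$ is rational by the generalized Cauchy formula together with the induction hypothesis applied to the boundary contributions. Your route (a) (``invoke the admissibility framework'') is, once spelled out, just this induction; it is not a shortcut to a pairing. Your route (b) (``reduce to the compact top-degree case'') does not apply: in a smooth projective toric compactification the form acquires logarithmic poles along the boundary divisor, the compactified cycle generically meets that divisor, and the integral is not an intersection-theoretic pairing of closed classes. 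The $r\leq 1$ cases you implicitly model your argument on are exactly the cases where admissibility forces the cycle to be compact and disjoint from the boundary; that is why the paper can use the $H^1$-pairing for $r=1$ but must abandon it for $r\geq 2$.
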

\begin{proof}
  By compatibility of the boundary map and the face maps \cite[Proposition 3.6 (1)]{HKT20} and rationality of the face maps (\cref{remind face map in HKT}),
  the assertion follows from \cref{rationality of logarithmic integration}.
\end{proof}
\begin{lem}\label{rationality of logarithmic integration}
 Let $V \in AC_r(X(\C),\Q)$ such that $\partial(V)=0$. 
 Then we have
  $$\int_{V} -\frac{1}{2\pi i} \frac{df_1}{f_1} \wedge \dots \wedge -\frac{1}{2\pi i} \frac{df_r}{f_r} \in \Q  \ (f_i \in M).$$
\end{lem}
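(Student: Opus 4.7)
The plan is to use Hanamura--Kimura--Terasoma's generalized Cauchy formula \cite[Theorem 4.3]{HKT20} inductively, combined with the compatibility of the ordinary boundary $\partial$ with the toric face maps $\partial_{\overline{O(l)}}$. As preparation, I would refine the toric ambient via \cref{blow-up of admissible is also admissible}: a toric blow-up preserves admissibility of $V$ and leaves the integral unchanged (the form $\prod -\frac{1}{2\pi i}\frac{df_i}{f_i}$ simply pulls back to a form of the same shape), so by subdivision we may reduce to the case $T_{\Sigma}=(\P^1)^n$ treated directly in \cite{HKT20}.

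Next, applying \cite[Theorem 4.3]{HKT20} to $V$, the integral
\[
\int_V -\frac{1}{2\pi i}\frac{df_1}{f_1}\wedge\cdots\wedge -\frac{1}{2\pi i}\frac{df_r}{f_r}
\]
expands as a $\Q$-linear combination of a boundary contribution $\int_{\partial V}(\cdots)$, which vanishes by hypothesis, and of residue contributions involving face chains $\partial_{\overline{O(l)}}(V)\in AC_{\ast}(\overline{O(l)}(\C),\Q)$ of lower dimension, paired with logarithmic forms in the restricted characters of $\overline{O(l)}$. The whole point of the $-\frac{1}{2\pi i}$ normalization is that each $2\pi i$ residue extracted by a face map cancels precisely one such factor, so no transcendental constant is produced at any step.

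Finally I would close by induction on $r$. By \cite[Proposition 3.6(1)]{HKT20}, the face maps commute with $\partial$, so $\partial(\partial_{\overline{O(l)}}(V))=\partial_{\overline{O(l)}}(\partial V)=0$; hence each face chain is itself a closed admissible $\Q$-chain in the smaller smooth toric variety $\overline{O(l)}$, and the induction hypothesis yields rationality of each summand. The base case $r=0$ is trivial: the empty wedge product is $1$ and $V$ is a $\Q$-linear combination of points, so the integral is a $\Q$-linear combination of ones. The main obstacle, which is really just bookkeeping, is aligning the sign conventions, multiplicities, and $(2\pi i)$-normalizations of \cite[Theorem 4.3]{HKT20} with those of our integrand, so that the cancellations land in $\Q$ rather than in some extension generated by a power of $\pi$.
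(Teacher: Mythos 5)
Your proposal is structured as an induction that applies the generalized Cauchy formula \emph{directly to $V$}, reading off a boundary term $\int_{\partial V}(\cdots)$ plus residue/face-map contributions. That step is where the argument breaks. The Cauchy formula of \cite[Theorem 4.3]{HKT20} is a statement about $\int_{\partial Z}\omega$ for a chain $Z$ of one dimension \emph{higher} than the degree of $\omega$: it converts the integral of an $r$-form over the $r$-dimensional boundary $\partial Z$ into residue contributions coming from the face maps $\partial_{\overline{O(l)}}(Z)$. It does not give a formula expressing $\int_V\omega$ for an $r$-cycle $V$ (with $\omega$ an $r$-form on $V$) as a boundary integral plus residues; there is no natural $(r-1)$-form to integrate over $\partial V$, since any anti-derivative such as $\log f_1\cdot d\log f_2\wedge\cdots$ is multivalued and applying Stokes to it requires precisely the bookkeeping that the Cauchy formula packages \emph{for boundaries}, not for arbitrary cycles. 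So ``applying Theorem 4.3 to $V$'' tells you something about $\int_{\partial V}$ (which you already know is zero), not about $\int_V\omega$.

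The missing idea is the admissible moving lemma (\cref{admissible moving lemma}) together with the fact that $H^{\sing}_*(T_\Sigma(\C),\Z)$ is generated by algebraic cycle classes. Since $\partial V=0$, one writes $V-W=\partial Z$ with $W$ an algebraic cycle in $AC_r$ and $Z$ a higher-dimensional admissible chain. Then $\int_W\omega=0$ because $\omega$ is a holomorphic $r$-form and $W$ has complex dimension $r/2<r$, and the remaining piece $\int_{\partial Z}\omega$ \emph{is} a boundary integral, so the generalized Cauchy formula applies and the residue contributions can be fed to the inductive hypothesis. Your proposal also skips the $r=1$ base case, which the paper handles by noting that admissibility forces $V\subset\G_m^n(\C)$ and $-\frac{1}{2\pi i}\frac{df}{f}$ is an integral class in $H^1_{\sing}(\G_m^n(\C),\Q)$; this case does not fit the generic residue-peeling pattern. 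Finally, the reduction to $(\P^1)^n$ you propose at the outset is unnecessary for this lemma (though it is used elsewhere in the paper), since the generalized Cauchy formula is local and holds over any smooth proper toric variety.
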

\begin{proof}
 We prove by induction on $r$.
 When $r$ is $0$, the assertion is trivial.
 When $r$ is $1$, by admissibility, the cycle $V$ is contained in $\G_m^n(\C)$.
 Since $-\frac{1}{2\pi i} \frac{df}{f} \in H^1_{\sing}(\G_m^n(\C), \Q)$, the assertion holds in this case.
 We assume $r \geq 2$.
 By \cref{admissible moving lemma},
 since $H_{\sing}^*(T_{\Sigma}(\C),\Z)$ is isomorphic to the Chow group of $T_{\Sigma}$,
 there exist an algebraic cycle $W \in AC_r(T_{\Sigma}(\C),\Q)$ (i.e., a finite sum of closed complex algebraic subvarieties)
 and
 a semi-algebraic chain $Z \in AC_r(T_{\Sigma}(\C),\Q)$
 such that 
 $$V-W=\partial Z \in AC_r(T_{\Sigma}(\C),\Q).$$
 Since 
 $W$ is a complex algebraic cycle, we have
 $$\int_{W} -\frac{1}{2\pi i} \frac{df_1}{f_1} \wedge \dots \wedge -\frac{1}{2\pi i} \frac{df_r}{f_r} =0  \ (f_i \in M).$$
 By the generalized Cauchy formula \cite[Threorem 4.3]{HKT20} and the hypothesis of the induction,
 we have
 $$\int_{\partial Z} -\frac{1}{2\pi i} \frac{df_1}{f_1} \wedge \dots \wedge -\frac{1}{2\pi i} \frac{df_r}{f_r}
 \in \Q  \ (f_i \in M).$$
 (Note that the notation of \cite{HKT20} and ours are different. 
 The generalized Cauchy formula \cite[Theorem 4.3]{HKT20} also holds for smooth proper toric varieties since the assertion is local.)
 Consequently,  we have
 $$\int_{V} -\frac{1}{2\pi i} \frac{df_1}{f_1} \wedge \dots \wedge -\frac{1}{2\pi i} \frac{df_r}{f_r} \in \Q  \ (f_i \in M).$$
\end{proof}

By \cref{rationality of map}, we get a map 
$$\wtTrop \colon H_r(AC_*(X(\C),\Q)) \to H_r^{\Trop}(\Trop(X),\Q).$$

When $X \subset T_{\Sigma}$ is Sch\"{o}n,
by moving lemma (\cref{admissible moving lemma} (\cite[Theorem 2.8]{HKT20})) and existence of semi-algebraic triangulations of $X(\C)$, we get a map 
$$ \wtTrop \colon H_r^{\sing}(X(\C),\Q))  \to H_r^{\Trop}(\Trop(X),\Q).$$

Consequently, by \cref{comparison of maps analytic and semi-algebraic} and the tropical de Rham theorem \cite[Theorem 1]{JSS17}, we have the following.
\begin{cor}\label{comparison of Trop^* and wtTrop2}
 We assume that 
 $X \subset T_{\Sigma}$ is Sch\"{o}n and 
 every class in the tropical Dolbeault cohomology group $H_{\Trop}^{p,q}(\Trop(X))$ has a $d'$-closed representative.
Then we have 
$$\Trop^* =  \wtTrop^\vee \colon 
H_{\Trop}^{p,q}(\Trop(X),\R) \to H_{\sing}^{p+q}(X,\C).$$
  In particular, for $p \neq q$, the map
  $$ \Trop^* \colon H_{\Trop,\Dol}^{p,q}(\Trop(X)) \to H_{\sing}^{p+q}(X,\C)$$
  is $0$-map.
\end{cor}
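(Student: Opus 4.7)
The plan is to reduce to a single superform generator $w = f(x)\, d'x_I \otimes d''x_J$ with $|I|=p$ and $|J|=q$, and to evaluate both sides stratum by stratum. By linearity in $w$ and the boundary condition for superforms, a partition-of-unity argument on $\Trop(X)$ lets me assume $w$ is supported in a small neighborhood of the closed toric stratum $\Trop(\overline{O(\sigma)})$ for a single cone $\sigma \in \Sigma$. Admissibility of $V$ then lets me write the right-hand integral as a sum over toric strata $V \cap O(\tau)(\mathbb{C})$, each of real dimension at most $(p+q)-2\dim\tau$, so only finitely many $\tau$ can contribute to the limit.

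For each stratum $O(\tau)$ I run essentially the calculation already performed in the proof of \cref{limit of -epsilon log omega}(1). Introducing polar coordinates $z_i = r_i e^{i\theta_i}$ in the directions transverse to $\overline{O(\tau)}$, substituting $x_i = -\epsilon \log r_i$, and applying Lebesgue dominated convergence, the $\epsilon \to 0$ limit becomes an explicit iterated integral over $\R^q \times [0,2\pi]^q \times (V \cap O(\tau)(\mathbb{C}))$. A direct bookkeeping of the powers of $\epsilon$ coming from the factors $-\tfrac{\epsilon}{2}\, d\log\bar{z}_i$ and from the substitution $dr_i/r_i = -dx_i/\epsilon$ shows that the contribution vanishes unless $\dim\tau = q$ and $J$ indexes the $q$ transverse directions. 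This matches the observation that only cones $P \subset \Trop(X)$ with $\dim P = q$ and $\sigma_P = \tau$ contribute to the $(p,q)$-part of $\wtTrop(V)$.

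In the surviving case, the $\theta_i$-integrations ($i \in J$) combined with the constants $-\tfrac{1}{2\pi i}$ and $-\tfrac{\epsilon}{2}$ produce precisely the residues that realize the iterated face map $\partial_{\overline{O(l_q)}} \circ \cdots \circ \partial_{\overline{O(l_1)}}$ recalled in \cref{remind face map in HKT}. What is left is the product of $\int_{\partial_{\overline{O(P)}}(V)} \bigwedge_{i \in I \setminus J} -\tfrac{1}{2\pi i}\, d\log z_i$ with $\int_{[P]} f(x)\, dx_J$, which is precisely the $P$-component of $\int_{\wtTrop(V)} w$ dictated by the formula for $\wtTrop(V)_P$ in \cref{subsection overview of semi-algebraic construction}. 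Summing over $P$ and $\tau$ yields the claimed identity.

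The main obstacle will be the sign bookkeeping. The prefactor $(-1)^{q(q-1)/2}$ in the definition of $\wtTrop(V)_P$, the sign conventions in the ordering of the $d''$ before the $d'$ factors in the pullback $-\epsilon \log\lvert\cdot\rvert^*$, the orientation $1_{\wedge^q \Span_\mathbb{Z} P}$, and the signs coming from iterating the face maps must all combine consistently. This should reduce to a local Koszul-sign identity in a single coordinate chart, verifiable once and for all and then transported along toric blow-ups using \cref{weighted tropicalizations independent of the choice of fan}.
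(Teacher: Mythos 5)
Your proposal is an outline of the chain-level integral identity $\int_{\wtTrop(V)} w = \lim_{\epsilon\to 0}\int_V -\epsilon\log|\cdot|^*(w)$, which is \cref{comparison of maps analytic and semi-algebraic} in the paper; the stratum-by-stratum polar-coordinate computation with dominated convergence that you describe is essentially the strategy used there. But the statement you were asked to prove is the corollary, which compares $\Trop^*$ and $\wtTrop^{\vee}$ \emph{as maps on cohomology}, and your write-up never crosses the bridge from the chain/form level to cohomology classes.

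The $d'$-closed hypothesis, which appears explicitly in the statement, never enters your argument; that is a clear sign a step is missing. It is needed in two places. First, to make the pairing $\int_{\wtTrop(V)}w$ descend to $H^{p,q}_{\Trop}(\Trop(X),\R)\times H^{\sing}_{p+q}(X,\Q)$, the form $w$ must be killed by the full differential $d=d'+d''$, not merely by $d''$, because tropical singular chains see $d$, not the $(p,*)$-Dolbeault complex; this is exactly why a $d'$-closed representative is required. Second, to interpret $\Trop^*([w])$ as a class in $H^{p+q}_{\sing}(X,\C)$ rather than only a Dolbeault class, one needs the current $\Trop^*(w)$ to be $d$-closed; since $-\epsilon\log\lvert\cdot\rvert^*$ intertwines $d'$ with $\partial$, this again uses $d'w=0$. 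You also omit the admissible moving lemma (\cref{admissible moving lemma}), which identifies $H_r(AC_*(X(\C),\Q))$ with $H_r^{\sing}(X(\C),\Q)$ so that evaluation against admissible chains computes singular cohomology, and the tropical de Rham theorem realized by integration on semi-algebraic chains. Finally, the ``in particular'' vanishing for $p\neq q$ is not a byproduct of the integral identity at all: it follows from the rationality of $\wtTrop$ (\cref{rationality of map}), which forces $\wtTrop^{\vee}(H^{p,q}_{\Trop}(\Trop(X),\Q))\subset H^{p+q}_{\sing}(X,\Q)$; after complexification this subspace is conjugation-invariant, while $\Trop^*$ lands in $H^{p,q}(X(\C))$, and the intersection $H^{p,q}\cap H^{q,p}$ is zero when $p\neq q$. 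None of these ingredients appear in your sketch, so as written it proves a preliminary proposition, not the corollary.
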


\begin{cor}\label{comparison of Trop^* and wtTrop}
  We assume that for any smooth toric variety $T_{\Lambda}$ and  Sch\"{o}n compact subvariety $X' \subset T_{\Lambda}$, 
  every class in the tropical Dolbeault cohomology group $H_{\Trop}^{p,q}(\Trop(X'))$ has a $d'$-closed representative.
  Then for $p\neq q $ and a smooth projective variety $Y$ over $\C$,
  the map
  $$ \Trop^* \colon H_{\Trop,\Dol}^{p,q}(Y) \to H^{p,q}(Y(\C))$$
  is $0$-map.
\end{cor}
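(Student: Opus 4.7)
The strategy is to reduce the statement to the previous corollary, which handles Sch\"{o}n compact closed subvarieties of smooth toric varieties, via a suitable birational modification of $Y$. Given a class $[\omega] \in H_{\Trop,\Dol}^{p,q}(Y)$ with $p \neq q$, I would first invoke \cref{existence of Schon variety} to obtain an open $U \subset Y$ together with a closed immersion $U \hookrightarrow \G_m^r$ whose closure $\overline{U}$ in some smooth toric variety $T_{\Lambda}$ is a Sch\"{o}n compactification. Both $Y$ and $\overline{U}$ are smooth projective compactifications of $U$, so by resolving the indeterminacy of the birational map from $Y$ to $\overline{U}$ (for instance by resolving the graph and then further resolving any remaining singularities), I produce a smooth projective variety $\widetilde{Y}$ together with birational morphisms $\pi \colon \widetilde{Y} \to Y$ and $\rho \colon \widetilde{Y} \to \overline{U}$, which may be arranged to be iterated blow-ups along smooth centers.

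The crucial algebraic input is the injectivity of pullback on tropical cohomology under blow-ups at smooth centers: via the identification $H_{\Trop}^{p,q}(X) \cong H^q(X_{\Zar}, \K_T^p)$ of \cite[Theorem 1.3]{M20-2} combined with the blow-up formula for Rost-style cycle modules in \cite[Lemma 12.9]{Ros96}, both $\pi^*$ and $\rho^*$ become injective on tropical cohomology. Under the standing assumption on $d'$-closed representatives, the previous corollary applies to the Sch\"{o}n compact $\overline{U}$ and gives $\Trop^*_{\overline{U}} = 0$ in the range $p \neq q$. The naturality of $\Trop^*$, which is built into its construction as a limit of pullbacks of the form $-\epsilon \log \lvert \cdot \rvert^*$ and equivalently into its description as the map induced by the morphism of Zariski sheaves $\K_T^p \to \Omega_{\alg}^p$ from \cref{maps of cohomology are the same}, then yields the identity $\pi^*(\Trop^*_Y[\omega]) = \Trop^*_{\widetilde{Y}}(\pi^*[\omega])$ in $H^{p,q}(\widetilde{Y}(\C))$.

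To finish I need to know that $\pi^*[\omega]$ lies in the image of $\rho^*$, so that the right-hand side becomes $\rho^*(\Trop^*_{\overline{U}}[\omega']) = 0$ for some $[\omega'] \in H_{\Trop,\Dol}^{p,q}(\overline{U})$; once this holds, the injectivity of $\pi^*$ on singular (and hence Dolbeault) cohomology, which follows from $\pi_* \pi^* = \id$ for iterated blow-ups at smooth centers of smooth projective varieties, forces $\Trop^*_Y[\omega] = 0$. The main obstacle is establishing the identification $\pi^*[\omega] = \rho^*[\omega']$: this requires a careful comparison of the direct-limit definitions of tropical cohomology for $Y$ and for $\overline{U}$, passing through a single embedding that simultaneously refines both a representative $\varphi \colon Y \to T_{\Sigma'}$ of $[\omega]$ and the embedding $U \hookrightarrow T_{\Lambda}$, and then exploiting the injectivity of $\rho^*$ on tropical cohomology to descend the resulting class on $\widetilde{Y}$ to $\overline{U}$. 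A possibly cleaner alternative would bypass $\overline{U}$ entirely by producing, via a combination of toric blow-ups of some ambient toric variety and blow-ups of the strict transform, a single modification $\widetilde{Y} \to Y$ for which $\widetilde{Y}$ itself admits a Sch\"{o}n compact embedding; the previous corollary then applies directly to $\widetilde{Y}$, and only the elementary injectivity of $\pi^*$ on singular cohomology is required to descend the vanishing to $Y$.
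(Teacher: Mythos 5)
Your first route hits a genuine wall at precisely the point you flag: the pull‑back $\pi^*[\omega]$ does \emph{not} in general lie in $\Im(\rho^*)$. The subspaces $\Im(\pi^*)$ and $\Im(\rho^*)$ are distinct direct summands of $H_{\Trop,\Dol}^{p,q}(\widetilde{Y})$, and neither contains the other. One sees the shape of the problem already with $Y=\P^2$, $\overline{U}=\P^1\times\P^1$, $\widetilde{Y}$ their common blow-up: in $H^{1,1}(\widetilde{Y})=\Q\langle \pi^*h,E_1,E_2\rangle$ one has $\Im(\rho^*)=\Q\langle \pi^*h-E_1,\pi^*h-E_2\rangle$, which does not contain $\pi^*h$. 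Your proposed repair --- a ``careful comparison of the direct-limit definitions'' --- cannot fix this, since the obstruction lives in the target group on $\widetilde{Y}$, not in the indexing of the colimit. The ``cleaner alternative'' also has a gap: blowing up $\overline{U}$ need not preserve Sch\"on compactness, so you cannot just produce an iterated smooth blow-up $\widetilde{Y}\to Y$ that is itself Sch\"on compact in some $T_\Sigma$; moreover, the preceding corollary vanishes $\Trop^*$ for a \emph{fixed} tropically compact embedding, and some further argument is needed to pass to the full direct limit $H_{\Trop,\Dol}^{p,q}(\widetilde{Y})$.

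What the citation of \cite[Lemma 12.9]{Ros96} is really carrying is not just \emph{injectivity} of pull-back but the full blow-up \emph{decomposition} of cycle-module cohomology, combined with induction on dimension. Factoring $\rho\colon\widetilde{Y}\to\overline{U}$ into blow-ups at smooth centers $Z_i$ of codimension $c_i$, the Rost formula applied to $\K_T^p$ (via $H_{\Trop}^{p,q}\cong H^q(\,\cdot_{\Zar},\K_T^p)$ from \cite[Theorem 1.3]{M20-2}) gives
\[
H_{\Trop}^{p,q}(\widetilde{Y})\cong \rho^*H_{\Trop}^{p,q}(\overline{U})\oplus\bigoplus_{i}\bigoplus_{j=1}^{c_i-1}\bigl(\text{terms coming from } H_{\Trop}^{p-j,q-j}(Z_i)\bigr),
\]
and a matching decomposition of $H^{p,q}(\widetilde{Y}(\C))$, both built from pull-backs and Gysin maps and hence intertwined by $\Trop^*=d(-\tfrac{1}{2\pi i}\log(-))$. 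The preceding corollary kills the $\overline{U}$-summand; induction on dimension kills each exceptional summand (the $Z_i$ are smooth projective of strictly smaller dimension, and $p-j\neq q-j$ exactly when $p\neq q$). This yields $\Trop^*_{\widetilde{Y}}=0$ on the \emph{entire} group, and only then, as you correctly observed, does naturality plus injectivity of $\pi^*$ on $H^{p,q}$ descend the vanishing to $Y$.
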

\begin{proof}
This follows from the existence of Sch\"{o}n very affine open subvarieties \cite[Theorem 1.4]{LQ11} 
and injectivity of the pull-back maps of tropical cohomology groups under blow-ups of smooth algebaic varieties over trivially valued fields at smooth center (which follows from \cite[Theorem 1.3]{M20-2} and \cite[Lemma 12.9]{Ros96}).
\end{proof}

\begin{rem}
 The assumption of the exsitence of $d'$-closed representatives is  natural as an analog of Dolbeault cohomology of compact K\"{a}hler manifolds. (Every class of them has a unique harmonic representative.)
 However, the correctness of this assumption is still not known for most cases.
\end{rem}

\subsection*{Acknowledgements}
Many parts of this paper were studied when I was a doctorial student.
I would like to express my sincere thanks to my adviser Tetsushi Ito for  invaluable advice and persistent support.
The remaining parts were studied when I am a postdoctoral researcher at Academia Sinica.  
I also deeply thank my mentor Yuan-Ping Lee for his kind encouragement and helpful advice.
I thank Klaus K\"{u}nnemann for letting me know Bloch-Kato and Gabber's result (\cite[Theorem 2.1]{BK86}).
This work was supported by JSPS KAKENHI Grant Number18J21577.
I thank Helge Ruddat for telling his related results.

\end{document}